\numberwithin{equation}{section}
\newtheorem{Proposition}{Proposition}
\numberwithin{Proposition}{section}
\newtheorem{Theorem}{Theorem}
\numberwithin{Theorem}{section}
\newtheorem{Lemma}{Lemma}
\numberwithin{Lemma}{section}
\newtheorem{Remark}{Remark}
\numberwithin{Remark}{section}
\numberwithin{Corollary}{section}
\newcommand{\p}{\partial}
\begin{document}
\title[Baroclinic instability for Boussinesq equations]{On the baroclinic instability of  inviscid non-conducting Boussinesq equations with rotation  in 3-D}

\author{Jingjing Mao and  Yan-Lin Wang} 

\address[J. Mao]{School of Science, Zhejiang University of Science and Technology, Hangzhou 310023, China}
\email{{mao.jingjing@outlook.com}}

\address[Y.-L. Wang]{School of Mathematical Sciences, Zhejiang University of Technology,
Hangzhou 310023, China.}
\email{ ylwang@zjut.edu.cn; yanlinwang.math@gmail.com}

\begin{abstract}

In this paper,  we prove the  nonlinear instability of  a  given  vertical  shear of velocity between two rigid plane  for the 3-D inviscid,  non-conducting Boussinesq equations with rotation. When the Rossby number is zero,  this rotating inviscid Boussinesq system reduces to  the nonlinear  geostrophic limit  model.   For non-zero small  Rossby numbers,  we establish the nonlinear instability of the shear flow, which is consistent with that of  the  geostrophic limit model.  The proof relies on constructing a precise  approximate solution, which  comprises  a growing profile derived from  the nonlinear geostrophic limit model  and a   higher-order  asymptotic  expansion with respect to the small Rossby number.  Notice that  the instabilities  (growing modes)  are  driven by the physical boundaries.

\vspace{0.5cm}

\noindent Keywords: Rotating Boussinesq equations;  Shear flow; Nonlinear instability; Baroclinic instability.

\noindent AMS Subject Classification: 35Q35; 76F10
\end{abstract}

\let\origmaketitle\maketitle
\def\maketitle{
  \begingroup
  \def\uppercasenonmath##1{} % this disables uppercasing title
  \let\MakeUppercase\relax % this disables uppercasing authors
  \origmaketitle
  \endgroup
}
\maketitle

%\tableofcontents

\section{Introduction} 

 We consider the three-dimensional  inviscid non-conducting Boussinesq system  with reference rotation, which  is used to describe  the large-scale  dynamical progress,  such as atmospheric flows and oceanic flows.  The governing system   reads
\begin{equation}\label{original}
\begin{cases}
 R o\left(\partial_{t} u +u \partial_{x} u +v \partial_{y} u +R o w \partial_{z} u  \right)  -v =-\partial_{x}p ,\\
 R o\left(\partial_{t} v +u \partial_{x} v +v \partial_{y} v +R o w \partial_{z} v  \right)  +u =-\partial_{y}p ,\\
R o^2 H^2\left(\partial_{t} w   +u  \partial_{x} w +v  \partial_{y} w  +R o w  \partial_z w \right) =-\partial_{z}  p  +\theta ,\\
\partial_{t}\theta +u \partial_{x}\theta +v \partial_{y} \theta +R o w \partial_{z}  \theta+B w =0,\\
\partial_{x} u +\partial_{y} v +R o \partial_{z}  w  =0,
\end{cases}
\end{equation}
where  $(t, x, y, z) \in \mathbb{R}_+\times \mathbb{T}^2\times[0, 1],$  with horizontal periodic domain $\mathbb{T}^2:=(0, 1)^2.$   $(u, v, w)$, $p$ and $\theta$ stand for  velocity, pressure and  temperature of the fluid.
Here  $H$, $Ro$  and $B$ are, respectively,   the  height  scale  of the flow,  Rossby number and  Burger number.   For example, for the westerlies in the atmosphere,  it is found that $R o \ll 1$ and $B \approx 1$.  
For the readers' convenience, we present the derivation of \eqref{original} in the Appendix, in which the definition of $Ro$ and $B$ will be given.  For more  background and its fundamental  importance of the rotating  Boussinesq equations in both geophysics and mathematics, we refer  to  \cite{Drazin,Hart1979, Majda2003,  pedlosky} and references therein.  
  
 The system \eqref{original} is supplemented with  the boundary conditions:
\begin{align}w=0,\ \ \ {\rm at}\ \ \ z=0, 1. \label{orbc}\end{align}

We take a  basic shear flow of  the  system \eqref{original} as follows:
\begin{align}\label{shear}
\mathbf{U}=(z, 0, 0), \quad \Theta=-y, \quad P=-y z,
\end{align}
where $(x,y)\in \mathbb{T}^2$  and $0\leq z\leq 1.$  Note that this setup of  zonal flow confined between two rigid horizontal plane, together with the boundary condition \eqref{orbc},   is due to the famous Eady model  in atmospheric  science  (see, \cite{Eady1949}).  The shear flow \eqref{shear}  is taken from the physical consideration,  which is consistent with the pressure  distribution of  westerlies.  The instability of the shear flow \eqref{shear} is called {\it baroclinic instability} in meteorology,   because  it depends essentially upon the difference between the surfaces of constant density and of constant pressure in the fluid ( see \cite{Drazin} for the physical mechanism, for instance).   %Here  $\mathbf{U}=(z, 0,0)$ is a vertical shear of the velocity. Generally speaking,  instability induced by vertical shear of the velocity is called {\it Baroclinic Instability}. 

Formally, we take the geostrophic limit  $Ro\rightarrow 0$  in \eqref{original} to have 
\begin{align}
&u =-\partial_{y} p  , \quad v =\partial_{x} p  , \quad \theta =\partial_{z} p ,\label{Ro1}\\
 &\p_t(\p_y u-\p_x v)+u\p_x (\p_y u-\p_x v)+v\p_y (\p_y u-\p_x v)
+\partial_{z} w=0,\label{Ro2}\\
&\p_t \theta+u\p_x \theta+v\p_y \theta  +B w =0,\label{Ro3}
\end{align}
which is called the  {\it geostrophic limit   model} in the rest of this paper. 
Note that  the basic shear flow \eqref{shear} also satisfies the geostrophic limit  model.  Actually, the equations \eqref{Ro1}-\eqref{Ro3} can be rewritten as a nonlinear  equation of $p$:
\begin{align}
(\p_t-\p_y p \p_x+\p_x p\p_y)[(B(\p_{xx}+\p_{yy})+\p_{zz})p]=0. \label{scalerp}
\end{align}
That is, when $Ro\rightarrow 0$, the system \eqref{original}  can be rewritten   as  an   equation of  $p$ formally.

\subsection{Motivation and the problem}
The baroclinic instability theory was date  from the work of Charney \cite{Charney} in 1947.  Since then extensive progresses have been made.  In 1949, Eady   \cite{Eady1949}  first  revealed the physical  mechanism of (linear) baroclinic instability.  Eady showed that the two sets of Rossby waves cor­responding to both boundaries can give rise to exponential instability.

The instability of the shear flow \eqref{shear} in  Boussinesq system 
is an important topic in both atmospheric science and mathematics. 
  When $Ro=0,$  the linear  (in)stability of shear flow \eqref{shear} has been investigated in \cite{Eady1949, Drazin}.  
 However, if $Ro$ is small but nonzero, the system \eqref{original} can not be  written  into  a single equation of $p$. In this situation, the nonlinear instability of the shear flow \eqref{shear} is   interesting and complicated in mathematical analysis. 
 In this paper, our purpose is to study the nonlinear instability of the shear flow \eqref{shear} in the inviscid non-conducting Boussinesq equations with a  small Rossby number $Ro> 0.$ 
 
To approach this  purpose,  we set
 $$\mathbf{u}=\mathbf{U}+\mathbf{u}^{*}, \quad \theta=\Theta+\theta^{*}, \quad p=P+p^{*},$$
where $\mathbf{u}=(u, v, w)$ and $\mathbf{u}^*=(u^*, v^*, w^*)$. Then  we write the system \eqref{original} into a  perturbed  form:
\begin{equation}\label{perturbation_equations}
\begin{cases}
 R o\left(\partial_{t} u^{*} +z \partial_{x} u^{*} +u^{*} \partial_{x} u^{*} +v^{*} \partial_{y} u^{*}+R o w^{*} \partial_{z} u^{*} +R o w^{*}\right)  -v^{*}\\
 =-\partial_{x} p^{*} ,\\
R o\left(\partial_{t} v^{*} +z \partial_{x} v^{*}+u^{*} \partial_{x}v^{*} +v^{*}\partial_{y} v^{*} +R o w^{*} \partial_{z}v^{*} \right)+u^{*} =-\partial_{y} p^{*}, \\
R o^2 H^2 \left(\partial_{t} w^{*} +z \partial_{x} w^{*} +u^{*} \partial_{x} w^{*} +v^{*} \partial_{y}w^{*} +R o w^{*} \partial_{z} w^{*} \right)\\
 =-\partial_{z} p^{*} +\theta^{*},\\
\partial_{t} \theta^{*} +z \partial_{x} \theta^{*} +u^{*} \partial_{x} \theta^{*} +v^{*} \partial_{y} \theta^{*} +R o w^{*} \partial_{z} \theta^{*} -v^{*}+B w^{*} =0,\\
\partial_{x} u^{*} +\partial_{y} v^{*} +R o \partial_{z} w^{*}  =0,
\end{cases}
\end{equation}
which is complemented with the  following boundary conditions 
\begin{align}\label{boundary_conditions_w}
w^*=0 \quad \text { at } z=0,1 .
\end{align}

Suppose that  a small perturbation is given  to the shear flow initially.  If  there exists a time such that this perturbation grows larger than the scale  of initial perturbation and away from the shear flow, we say the shear flow is unstable.  In this paper we will give a rigorous verification  on the nonlinear instability of the shear flow in the inviscid non-conducting Boussinesq equations with rotation. We will first verify the existence of a growing mode in the nonlinear geostrophic limit model.  Then we construct an approximate solution  for the case $Ro>0$  containing  the shear flow and  a growing profile deduced from the geostrophic limit model.     Then the nonlinear instability of  the shear flow in  inviscid non-conducting Boussinesq  system  can be established through an asymptotic estimates in Sobolev space.   

\subsection{Previous works and main results}
The local existence and blow-up criteria   of strong solution of system \eqref{original} with $Ro=1$ and without rotation  have been established in \cite{ChaeN} in Sobolev space $H^3$. The  inviscid non-conducting Boussinesq equations are constructed  by incompressible Euler equations coupling with  an advective scalar equation. Hence  the Boussinesq system have  some common features of Euler equations \cite{BKM, kato1972, ponce_1985, TR}. 
For the last two equations in \eqref{original} in 2-D, together with  the additional condition $\textbf{u}=(-\p_y (-\Delta)^{-1/2}\theta, \p_x (-\Delta)^{-1/2}\theta)$,  they form the famous inviscid surface  quasi-geostrophic (SQG) equation. One can refer to some recent works on 2-D SQG equation  \cite{FLR} for the spin of an almost-sharp front,  \cite{KRYZ} for the  singularity formation in  finite time , 
\cite{BSV} for nonuniqueness of weak solutions, \cite{CIN} for the zero-viscosity  limit  and  \cite{CGI, CCG} for the  global existence of smooth solutions. 

In view of the well-posedness theory for  Boussinesq system with  full or partial  dissipation,  there are extensive progress.  The  global existence theory  of 2-D Boussinesq equations with either  zero diffusivity or  zero viscosity has been established by  Chae \cite{chae2006}.  Cao and Wu \cite{Caowu}  proved the existence of global  solution in $H^2(\mathbb{R}^2)$ for anisotropic Boussinesq system with only vertical dissipation. Then  the regularity  on  the initial data for the global existence theory  has been relaxed  in the work \cite{Lititi}.  More recently, Chen and Hou \cite{CH}  proved  the finite time singularity formation of $C^{1,\alpha}$ solution for the 2-D Boussinesq equation.   For more well-posedness theory of the  Boussinesq system with  anisotropic dissipation, 
we refer to \cite{AH, DP1, DP2, houli, KM,  LPZ} and references therein.

Recently, the stability  of  shear flow in  Boussinesq system has attracted a lot of attention.  
As for the three-dimensional continuously stratified rotating Boussinesq model with viscosity and diffusivity,  Seng\"{u}l  and  Wang  studied the nonlinear stability and dynamic transitions of the basic  shear flow \eqref{shear} in \cite{SW},   in which numerical analysis has also been given. 
Yang and Lin  \cite{LY} considered the linear stability of  Couette flow $(y, 0)^\top$ and an exponentially stratified density  $\Theta=A e^{-b y}$ ($A$ and $b$ are constants) and obtained the decay rates.  Then Bianchini, Coti Zelati and Dolce \cite{BCD} extended the linear stability results in \cite{LY}  to the case of  shear near Couette flow.  Moreover,  Bedrossian, Bianchini, Coti Zelati and Dolce \cite{BBCD} verified  the nonlinear shear-buoyancy
instability of $U=(y, 0)$ and $\rho=\bar \rho-by$ ($\bar \rho$ and $b$ are constants) up to the timescale $t\approx \varepsilon^{-2}$ if the initial perturbations are of size $\varepsilon$ in Gevrey-$s$, $1\leq s<2.$  In their work \cite{BBCD}, the Rossby number is assumed $Ro=1.$  More recently, Bianchini, Coti Zelati and  Ertzbischoff verified the instability of the  stratified shear flows $(\rho, u, v, p)(x, z)=(\rho(z), U(z), 0, p(z))$ with $p'(z)=-\rho(z)$ in  2-D  Euler-Boussinesq system when the classical Miles-Howard criterion is violated,  and proved the ill-posedness of the limiting hydrostatic equations near the stratified shear flows in Sobolev space \cite{BCZE}.     For a shear flow $U=(y, 0)$ and a stratified temperature $\Theta= \alpha y$ with $\alpha>1/4,$ Zillinger  considered its (in)stability problem in two dimensional  inviscid  Boussinesq equations  \cite{Zi3},  in which a non-trivial  growing solution of the nonlinear inviscid Boussinesq equations has been presented  and  the improved upper bound on norm inflation has been achieved. While  in this paper we will consider the baroclinic instability of shear flow $U=(z, 0, 0)$, $\Theta=-y$ and $P=-yz$ (which is different from the shear flow in the previous research) in   3-D inviscid  non-conducting Boussinesq system  with the reference rotation and small Rossby number in Sobolev space.  For more results on  (in)stability of shear flow in Boussinesq system with full or partial dissipation, we refer to \cite{Fri, ZZhao, ZZ, Zi1, Zi2} and references therein.

 For  nonlinear instability of the  background flow in inviscid fluid  system other than Boussinesq system,  we  mention  some  remarkable progress as follows.  Guo, Hallstrom and  Spirn
\cite{guo_2007} proved the nonlinear instability for vortex sheets
with surface tension, Hele-Shaw flows with surface tension, and vortex patches,  by means of Guo-Strauss's approach arose in  \cite{GS}. The nonlinear instability of  the given background solution  would  happen up to  the time scale  $\log{\frac{1}{\delta}}$ ($\delta$ is the scale of initial perturbation).  We notice that  the existence of  fastest exponential  growing mode  of the linear perturbed system  is crucial to establish the nonlinear instability in Guo-Strauss's approach.  On the other hand, Grenier \cite{Grenier} introduced another  method by constructing  a more precise approximate solution with a growing profile from the linearized  perturbed  equation.  For more nonlinear instability results of inviscid fluid, we refer to \cite{CS, LW, LZ,  XZ, SSB} and references therein.

\textbf{Guo-Strauss's method and Grenier's method.} More precisely, one denotes the   linearized  system around the  shear flow  as follows
$$\p_t \mathcal{U}+A\mathcal{U}=0,\ \ \ \ \ \ \ \ \ \  \text{(LE)}$$
where $A$ is a linear operator. Guo-Strauss's approach in  \cite{GS, guo_2007} requires  the estimates of generator  $e^{t A}\leq C_A e^{\lambda t}$ for some positive constant $C_A$.  Moreover, the  nonlinear  estimates  rely on the existence of the  largest  exponential growing mode.  On the other hand,  Grenier's approach requires to evaluate the spectral radius of $A,$ and then construct a more precise approximate solution $u_{app}=u_{shear}+\delta u_1e^{\lambda_0 t}+\delta \bar{u_1}e^{\bar\lambda_0 t}+u_R,$  where $u_1e^{\lambda_0 t}, \bar u_1e^{\bar\lambda_0 t}$ solves the linear equation (LE) and $u_R$ is the higher order expansion.  The instability is deduced from the growing profile and  the smallness of the  remainder of the approximate solutions.

\textbf{The method of our proof.}
In this paper we present  a new approach to  derive the nonlinear instability of the shear flow in inviscid non-conducting Boussinesq equations with fast rotation (small Rossby number). We will  not search the growing modes from the linear system (LE) for $Ro\neq 0$. Instead, we verify the existence of a specific growing mode in  the nonlinear geostrophic limit  model (where $Ro=0$). Then we construct an approximate solution built upon the nonlinear geostrophic limit  model.  In our paper,  the growing mode is not  necessary to be the largest one.

To establish the nonlinear baroclinic instability for 
Boussinesq system \eqref{original}  with a small Rossby number,  we start with  the  geostrophic limit  model.  
 In particular,  we suppose   a  perturbation   in the perturbed system \eqref{perturbation_equations} with  the  normal modes:
\begin{equation}\label{p_ee}
p(x, y, z)=\hat{p}(z) \exp(\mathrm{i} \alpha(x-c t))\sin \beta y,\ \ \ 0\leq z\leq 1,
\end{equation} 
where $c\in \mathbb{C},$   and  $\alpha, \beta\in \mathbb{Z}\backslash \{0\}$  denote  the  wavenumbers in $x$ and $y$ direction.  As we know, when $Ro=0,$ the linearized  system of  \eqref{perturbation_equations}  will reduce to an ODE  of  $\hat p$.  Let $\Re(\cdot)$ and  $\Im(\cdot)$ denote the real part and imaginary part of a complex number, respectively. Clearly, if $ \Re (-i \alpha c)<0,$ the shear flow is stable.  
If $ \Re (-i \alpha c)>0 $
 (i.e. $\Im(c) \alpha >0$), there exists an exponentially growing mode such that  the shear flow is unstable with respect to this perturbation.  In fact,   the exponential  growth rate is  deduced from  the  boundary conditions for certain  Burger number and  wavenumbers $\alpha, \beta.$ (cf. \cite{Drazin}).  
Notice  that  we  will  consider the nonlinear instability of the shear flow when $Ro >0.$  In this situation  the  perturbed  system \eqref{perturbation_equations} can not  be treated as a single equation  of $\hat{p}$ again. Therefore, we construct  an approximate  solution to the perturbed  system \eqref{perturbation_equations} with high order expansion in terms of $Ro$. The approximate solution near the shear flow  is formed by   the solution of  nonlinear geostrophic limit  model and the high order expansion in terms of $Ro$.  So it is  crucial  to verify  the existence of  a  growing profile in the nonlinear geostrophic limit model. Then combining the nonlinear instability of the shear flow in  the  nonlinear geostrophic limit model and the reasonable asymptotic expansion of  the solution to the  perturbed  Boussinesq system,  we will  establish our nonlinear instability theory for the case $Ro\neq 0$.
 
 \textbf{Notation.} In the rest parts of this paper, we may drop the superscript $*$  in the perturbed equations  \eqref{perturbation_equations}  \eqref{boundary_conditions_w} if it wouldn't   cause confusion.  
We will use $C$ to denote a generic positive  constant.  We denote 
$$\int:=\int_{\mathbb{R}^2\times [0,1]},\ \ {\rm or}\ \ \int:=\int_{\mathbb{T}^2\times [0,1]},$$
and $\|\cdot\|$ for the standard $L^2$-norm defined over ${\mathbb{R}^2\times [0,1]}$ or ${\mathbb{T}^2\times [0,1]},$ and $\|\cdot\|_{\infty}$ for the $L^\infty$ norm. We use $\Im (\cdot)$ to denote imaginary part and $\Re(\cdot)$ to denote the real part. 

Let $$q=\frac{1}{2}\sqrt{B(\alpha^2+\beta^2)},$$
where $B>0$ is Burger number, and  $\alpha, \beta>0$ are for  wavenumbers  defined in \eqref{p_ee}. Now we are ready to state our nonlinear instability result as follows:
 
%\begin{Theorem}[Informal  form]
%The shear flow \eqref{shear} is nonlinear unstable for  some Burger number and "perturbation wavenumber",  in the framework of the inviscid non-conducting Boussinesq system with rotation \eqref{original} for any given $Ro\geq 0$. 
%\end{Theorem}
\begin{Theorem}\label{mainth}
Assume $Ro>0$ and $0<\delta<1.$
Suppose that  $(\textbf{u}^*, \theta^*)(\cdot, t)$ is  a strong solution to the perturbed system \eqref{perturbation_equations} \eqref{boundary_conditions_w} satisfying 
$$\|(\mathbf{u}^*,\theta^*)(\cdot, 0)\|_{L^2}\leq \delta.$$
 There exists a positive  constant $q_c$ such that 
if $0<q<q_c,$ then there is  a time $T^\delta=O(\log \frac{\eta+C Ro}{\delta})$ so that 
\begin{align*}
\sup _{0 \leq t \leq T^\delta}\|(\mathbf{u}^*(\cdot, t),\theta^*(\cdot, t))\|_{L^2} \geq \eta 
\end{align*}
for some positive
 constant $C$  and  $1\geq \eta>\delta>0.$ \end{Theorem}
 
 \textbf{Remark.} We do not have the $L^\infty$-form estimate in the present result, due to the multi-scale singular limit  from \eqref{original} to \eqref{scalerp} in $L^\infty$-form is unsolved at present.  More precisely, the singularities  can be observed from the following linear system of \eqref{original}:
\begin{equation*}
\begin{cases}
& \p_t u=\frac{1}{Ro}(v-\p_x p),\\
 &\p_t v=\frac{1}{Ro}(-u-\p_yp),\\
 &H^2\p_t w=\frac{1}{Ro^2}(\theta-\p_z p)\\
 &\p_t \theta+Bw=0,\\
 &\p_x u+\p_y v+Ro \p_z w=0,
 \end{cases}
 \end{equation*}
 which exhibits multi-scale singularity in $Ro.$ We will left this multi-scale singular limit problem in our future research.   Meanwhile,  for the single scale singular limit (the order of singularity in $Ro$ is same, cf.\cite{BB94, Charve, BLT} for instance),  if  a "well-prepared" initial data is given, Beale  and Bourgeois proved the validity of the geostrophic limit  when Burger number  is  a fixed constant,  using an asymptotic expansion \cite{BB94} (see also  \cite{Charve}).  Recently, another rigorous  mathematical derivation of the single scale singular limit  from the inviscid  Boussinesq equations with rotation has been established  by Bardos, Liu and Titi \cite{BLT} for some general  initial data, using fast waves correction and compactness argument.   On the other hand,  Bresch, G\'erard-Varet and Grenier \cite{BGVG} studied the geostrophic limit for the case of  small Burger number, which is in the same order of Rossby number. For more relevant progress on the geostrophic limit, we refer to \cite{CDGG} and references therein.

The arrangement of the remaining parts  is as follows. Section 2 is  devoted 
to explaining the existence of growing modes in  both linear and nonlinear  geostrophic limit  model. The construction of approximate solution and verification of nonlinear baroclinic instability will be given in Section 3.  In the end of Section 3, we will verify our main result on the nonlinear baroclinic instability.

\section{Instability for the case $Ro=0$}\label{sec2}
The linear instability of shear motion \eqref{shear} has been verified by \cite{Eady1949} in a simplified  dynamic equation.  One can also find  a simplified survey for the linear  geostrophic limit  model  in \cite{Drazin}. 
For the completeness, we present  the main idea of deriving the growing modes in  the linear geostrophic limit model as follows.  
\subsection{Linear instability for $Ro=0$}
We begin with the linearized   equations  of (\ref{perturbation_equations}):
\begin{equation} \label{linear}
\begin{cases}
R o\left(\partial_{t} u +z \partial_{x}u+Row\right)-v =-\partial_{x} p,\\
R o\left(\partial_{t} v+z \partial_{x} v\right)+u =-\partial_{y} p, \\
R o^2 H^2 \left(\partial_{t} w+z \partial_{x} w\right)=-\partial_{z}p+\theta,\\
\partial_{t} \theta+z \partial_{x} \theta-v+B w =0,\\
\partial_{x} u+\partial_{y} v+R o \partial_{z} w =0.
\end{cases}
\end{equation}
Elimination of $p $ from equations $(\ref{linear})_{1}$ and $(\ref{linear})_{2}$ gives the vorticity equation,
$$
R o(\partial_{t}+z \partial_{x} )(\partial_{x} v  -\partial_{y} u )-R o^2 \partial_{y} w  +\partial_{x} u  +\partial_{y}v  =0.
$$
This, together with $\eqref{linear}_5$,  implies that
\begin{align}\label{linear_u_v}
\begin{split}
(\partial_{t}+z \partial_{x} )(\partial_{x} v  -\partial_{y} u) -R o \partial_{y} w  -\partial_{z}w  =0,
\end{split}
\end{align}

Let $R o = 0$.  It follows from \eqref{linear} and \eqref{linear_u_v}  that
\begin{equation}\label{puv}
u =-\partial_{y} p, \quad v =\partial_{x} p, \quad \theta =\partial_{z} p, 
\end{equation}
and 
\begin{equation}\label{Ro-0}
\begin{split}
&\partial_{t} \theta+z \partial_{x} \theta-v+B w =0,\\
&(\partial_{t}+z \partial_{x} )(\partial_{x} v  -\partial_{y} u ) -\partial_{z}w  =0.
\end{split}
\end{equation}
With the aid of \eqref{puv},  we  eliminate $w$ and   write \eqref{Ro-0} into the following form
\begin{align}\label{linear_p}
(\partial_{t}+z \partial_{x} )\big(B(\partial_{xx} p  +\partial_{yy} p )+\partial_{zz} p \big)=0 .
\end{align}
The boundary condition (\ref{boundary_conditions_w}) gives
\begin{equation}\label{boundary_conditions_w_linear_R_0}
(\partial_{t}+z \partial_{x} ) \partial_{z} p  -\partial_{x} p  =0 \quad \text { at } z=0,1 .
\end{equation}

%It may be noted that the eigenvalue problem defined by equations \eqref{linear_p}  \eqref{boundary_conditions_w_linear_R_0} is mathematically equivalent to the problem governing the stability of plane Couette flow of an inviscid homogeneous incompressible fluid in an inertial frame with constant pressure on the bounding planes.
Take the normal modes with
\begin{equation*}\label{p_e}
p =\hat{p}(z) \exp (\mathrm{i} \alpha(x-c t)) \sin \beta y ,
\end{equation*} 
where $c\in \mathbb{C}$ and  $\alpha, \beta \in \mathbb{Z} \backslash \{0\}$.
Equation $(\ref{linear_p})$ gives
\begin{align*} 
(z-c)\left(\partial_{zz} \hat{p} -B(\alpha^2+\beta^2) \hat{p}\right)=0 .
\end{align*}
From here, we can not  directly obtain  the growing modes (Namely, $\Re (c)>0$). Instead, for any $z,$ we take 
\begin{align*} 
\partial_{zz} \hat{p} -B(\alpha^2+\beta^2) \hat{p} =0.
\end{align*}
Hence one  can solve 
\begin{align}
\hat{p}=R \cosh 2 q z+S \sinh 2 q z, \label{solphat}
\end{align}
where
$$
q=\frac{1}{2}(B(\alpha^2+\beta^2))^{1 / 2},
$$
and $R, S$ are some constants.     From the  boundary conditions \eqref{boundary_conditions_w_linear_R_0},  we have
\begin{align*}
&c\p_z \hat{p}(0)+\hat p(0)=0\ \ \ {\rm at} \ \ z=0,\\
&c \p_z \hat{p}(1)-\p_z \hat{p}(1)+\hat{p}(1)=0 \ \ \ {\rm at} \ \ z=1.
\end{align*}
Submitting \eqref{solphat} into the above boundary conditions,   one can eliminate $R$ and $S$ to solve
\begin{align*}\label{cformula}
c=\frac{1}{2}\pm(q^2+1-2q\coth (2q))^{1/2}/(2 q).
\end{align*} 
Hence it holds  $\alpha \Im(c) >0$ if and only if $q<q_c,$ where $q_c (\approx 1.2)$ is the positive root of $q^2+1-2q\coth (2q)=0.$ That is, it has  an exponential growing mode $e^{\alpha \Im(c) t}$ when $B(\alpha^2+\beta^2)<4q_c^2.$ Of course, the largest growing rate $\lambda_{\max}:=\max\{\alpha \Im(c)\}$ can be obtained for a   $q\in (0, q_c).$

%\begin{figure}
%\centering
%\includegraphics[width=5cm, height=3cm]{qc}
%\end{figure}

Next, we will present the nonlinear instability  of the shear flow for the case $R_0=0$.
\subsection{Nonlinear instability for $Ro=0$}
%In this section, we will establish the nonlinear instability   of the shear flow for  the case $R_0=0$.
We recall the perturbation equations  (\ref{perturbation_equations}) that
\begin{equation}
\begin{cases}\label{22equ}
 R o\left(\partial_{t} u  +z \partial_{x} u  +u  \partial_{x} u  +v  \partial_{y} u +R o w  \partial_{z} u  +R o w \right)  -v  =-\partial_{x} p  ,\\
R o\left(\partial_{t} v  +z \partial_{x} v +u  \partial_{x}v  +v \partial_{y} v  +R o w  \partial_{z}v  \right)+u  =-\partial_{y} p , \\
R o^2 H^2 \left(\partial_{t} w  +z \partial_{x} w  +u  \partial_{x} w  +v  \partial_{y}w  +R o w  \partial_{z} w  \right) =-\partial_{z} p  +\theta ,\\
\partial_{t} \theta  +z \partial_{x} \theta  +u  \partial_{x} \theta  +v  \partial_{y} \theta  +R o w  \partial_{z} \theta  -v +B w  =0,\\
\partial_{x} u  +\partial_{y} v  +R o \partial_{z} w =0.
\end{cases}
\end{equation}
Elimination of $p $ from equations $(\ref{22equ})_{1}$ and $(\ref{22equ})_{2}$ gives the vorticity equation,
\begin{align*}
&R o(\partial_{t}+z \partial_{x}+u \partial_{x} +v \partial_{y})(\partial_{x} v  -\partial_{y} u )-R o^2 \partial_{y} w    \\
&\quad+R o(\partial_{x} u +\partial_{y} v ) (\partial_{x} v  -\partial_{y} u  )
+\partial_{x} u  +\partial_{y}v\\
&\quad+R o^2\left(\partial_{x} w  \partial_{z} v  -\partial_{y} w  \partial_{z} u 
+w \partial_{z}(\partial_{x} v  -\partial_{y} u  ) \right)=0,
\end{align*}
and then
\begin{align}\label{nonlinear_u_v}
\begin{split}
&(\partial_{t}+z \partial_{x}+u \partial_{x} +v \partial_{y})(\partial_{x} v  -\partial_{y} u )-R o\partial_{y} w     \\
&\quad+(\partial_{x} u +\partial_{y} v ) (\partial_{x} v  -\partial_{y} u  )-\partial_{z} w\\
&\quad+R o\left(\partial_{x} w  \partial_{z} v  -\partial_{y} w  \partial_{z} u  
+w \partial_{z}(\partial_{x} v  -\partial_{y} u  ) \right)=0,
\end{split}
\end{align}
where we used  $\eqref{22equ}_{4}$. In the geostrophic limit as $R o \rightarrow 0$, equations  $\eqref{22equ}_{1}$, $\eqref{22equ}_{2}$ and $\eqref{22equ}_{3}$ become
\begin{equation}\label{nonlinear_u_v_theta}
u =-\partial_{y} p  , \quad v =\partial_{x} p  , \quad \theta =\partial_{z} p  
\end{equation}
From $\eqref{22equ}_{4}$, (\ref{nonlinear_u_v}) and \eqref{nonlinear_u_v_theta}, it is clear that 
\begin{align}\label{nonlinear_u_v_p}
(\partial_{t}+z \partial_{x} -\partial_{y} p  \partial_{x} +\partial_{x} p  \partial_{y})
(\partial_{xx} p  +\partial_{yy} p  ) 
=\partial_{z} w   ,
\end{align} 
\begin{align}\label{nonlinear_theta_p}
(\partial_{t}+z \partial_{x} -\partial_{y} p  \partial_{x} +\partial_{x} p  \partial_{y})
\partial_{z} p 
-\partial_{x} p  +B w =0 .
\end{align}
Elimination of $w $ from equations (\ref{nonlinear_u_v_p}) and (\ref{nonlinear_theta_p}) finally gives
\begin{align}\label{nonlinear_p}
(\partial_{t}+z \partial_{x}- \partial_{y} p\partial_{x}   +\partial_{x}  p\partial_{y} )
\big(B(\partial_{xx} p  +\partial_{yy} p  )+\partial_{zz}p\big)=0 .
\end{align}
The boundary condition  (\ref{boundary_conditions_w})  gives
\begin{align} \label{nonlinear_p_boundary_2}
(\partial_{t}+z \partial_{x} )
 \partial_{z}  p  
-\partial_{y} p  \partial_{xz} p  
+\partial_{x}  p  \partial_{yz} p  
-\partial_{x}  p  =0  \text { at } z=0,1 .
\end{align}

\begin{Lemma}
The solution  of  the linear equations \eqref{linear_p} \eqref{boundary_conditions_w_linear_R_0}  satisfying  the form \eqref{p_e}  and  $\left(\partial_{zz} \hat{p} -B(\alpha^2+\beta^2) \hat{p}\right)=0$ 
also solves the nonlinear equations \eqref{nonlinear_p} \eqref{nonlinear_p_boundary_2}.  Moreover, the nonlinear perturbed equation \eqref{nonlinear_p}    \eqref{nonlinear_p_boundary_2} also has an exponential growing mode $e^{\alpha \Im (c) t}$ when $B(\alpha^2+\beta^2)<4 q_c^2,$  where $q_c$ is the positive root of $q^2+1-2q\coth (2q)=0.$
\end{Lemma}
{\it Proof.}
Indeed, the solution to the linear equations \eqref{linear_p} \eqref{boundary_conditions_w_linear_R_0}  in the form \eqref{p_e}   satisfies the boundary condition \eqref{nonlinear_p_boundary_2}, 
due to 
$-\p_y p\p_{xz}p+\p_x p\p_{yz}p=0.$
So the solution of  the equation  $\partial_{zz} \hat{p} -B(\alpha^2+\beta^2) \hat{p}=0$ with  \eqref{boundary_conditions_w_linear_R_0} also solve  the nonlinear problem \eqref{nonlinear_p} \eqref{nonlinear_p_boundary_2}. One can obtain that 
\begin{align}
&(\partial_{t}+z \partial_{x}- \partial_{y} p\partial_{x}   +\partial_{x}  p\partial_{y} )
\left(B(\partial_{xx} p  +\partial_{yy} p  )+\partial_{zz}p\right)\notag\\
&\quad = (-\partial_{y} p\partial_{x}   +\partial_{x}  p\partial_{y} )
\left(B(\partial_{xx} p  +\partial_{yy} p  )+\partial_{zz}p\right)\notag\\
&\quad=(-\partial_{y} p\partial_{x}   +\partial_{x}  p\partial_{y} )(\partial_{zz}\hat{p}-B(\alpha^2+\beta^2)\hat{p})\sin \beta y \exp{\{\text{i}\alpha (x-ct)\}}\notag\\
&\quad=0.
\end{align}

Now we  conclude that the nonlinear equations \eqref{nonlinear_p} \eqref{nonlinear_p_boundary_2} can   generate   the same growing mode  as the linearized equations  \eqref{linear_p}\eqref{boundary_conditions_w_linear_R_0} 
with respective to the prescribed  perturbation form \eqref{p_e}.   $\square$

\section{Baroclinic instability for the case $Ro\neq 0$}
In this section, we present the nonlinear instability of the shear motion in  the inviscid non-conducting Boussinesq system. The crucial step is  the construction of approximate solution with a growing profile applied to the nonlinear geostrophic limit model. The 
instability will happen up to a  time scale $T^\delta= O(\log \frac{1+C Ro}{\delta}),$ where $C$ is a positive constant and
 $\delta$ is the scale of initial perturbation.
\subsection{Construction of approximate solution.}
Recall that the dimensionless basic flow reads
\begin{equation*} 
\mathbf{U}=(z, 0, 0), \quad \Theta=-y, \quad P=-y z \quad \text { for } \  (x, y) \in \mathbb{T}^2, \quad 0 \leqslant z \leqslant 1.
\end{equation*}
We write the  total flow for the nonlinear  system \eqref{original} as
\begin{align} \label{app}
\mathbf{u}=\mathbf{U}+ (\mathbf{u}_{N}+\mathbf{u}_{e}), \quad \theta=\Theta+ (\theta_{N}+\theta_{e}), \quad p=P+ (p_{N}+p_{e}),
\end{align}
where $\mathbf{u}_N, \theta_N, p_N$ solve \eqref{nonlinear_u_v_theta}, \eqref{nonlinear_u_v_p} and \eqref{nonlinear_theta_p}. That is, 
\begin{align}
&u_{N} =-\partial_{y} p_{N}  , \quad v_{N} =\partial_{x} p_{N}  , \quad \theta_{N} =\partial_{z} p_{N}\label{utpN1},\\
&(\partial_{t}+z \partial_{x} -\partial_{y} p_{N}  \partial_{x} +\partial_{x} p_{N}  \partial_{y})
(\partial_{xx} p_{N}  +\partial_{yy} p_{N}  ) 
=\partial_{z} w_{N}  ,\label{utpN2}\\
&(\partial_{t}+z \partial_{x} -\partial_{y} p_{N}  \partial_{x} +\partial_{x} p_{N}  \partial_{y})\partial_{z} p _{N}
-\partial_{x} p_{N}  +B w_{N} =0 ,\label{utpN3}
\end{align} 
which satisfies the boundary conditions 
 $$w_N=0,\ \ \ {\rm at}\ \ \ z=0, 1,$$  which is equivalent to that
$$(\partial_{t}+z \partial_{x} ) \partial_{z} p_N  -\partial_{x} p_N -\p_yp_N\p_x\p_z p_N+\p_x p_N \p_y\p_z p_N=0 \quad \text { at } z=0, 1.$$

In this paper, we construct an approximation of $(\textbf{u}_e, \theta_e, p_e)$ in the following form:
\begin{align} \label{eap}
\begin{split}
\mathbf{u}_{e}& =R o\mathbf{u}_{e}^{(1)}+R o^2\mathbf{u}_{e}^{(2)}
                 +R o^3\mathbf{u}_{e}^{(3)} +\cdots,\\
\theta_{e}& =R o\theta_{e}^{(1)}+R o^2\theta_{e}^{(2)}
                 +R o^3\theta_{e}^{(3)} +\cdots,\\
p_e& =R op_{e}^{(1)}+R o^2p_{e}^{(2)}+R o^3p_{e}^{(3)} +\cdots,
\end{split}
\end{align}
 where $Ro$ is a small fixed constant but $Ro\neq 0$.
 
 \begin{Lemma}\label{lem4.1} Assume that  $\mathbf{u}$, $ \theta$ and $p$ satisfy the system (\ref{original}) \eqref{orbc}.
Let  \begin{align*} 
\mathbf{u}^{\prime} &=\mathbf{u}-\mathbf{U}-\mathbf{u}_a,\quad
\theta^{\prime}     =\theta-\Theta-\theta_{a},\quad
  p^{\prime}        =p-P-p_{a} ,
\end{align*}
where 
\begin{align*} 
u_{a}&=u_{N}+R ou_{e}^{(1)}+R o^2u_{e}^{(2)}+R o^3u_{e}^{(3)},\\
v_{a}&=v_{N}+R ov_{e}^{(1)}+R o^2v_{e}^{(2)}+R o^3v_{e}^{(3)},\\
w_{a}&=w_{N}+R ow_{e}^{(1)}+R o^2w_{e}^{(2)},\\
\theta_{a}    &=\theta_{N}+Ro\theta_{e}^{(1)}+Ro^2\theta_{e}^{(2)}+R o^3\theta_{e}^{(3)},\\
p_{a}         &=p_{N}+R op_{e}^{(1)}+R o^2p_{e}^{(2)}+R o^3p_{e}^{(3)} .
\end{align*}
There exist $((u_N, v_N, w_N), \theta_N, p_N)((x, y, z), t)\in H^3(\mathbb{T}^2\times [0,1], (0, \infty))$\\
 and 
$((u_e^{(i)}, v_e^{(i)}, w_e^{(i)}), \theta_e^{(i)}, p_e^{(i)})(\cdot, t)\in H^3(\mathbb{T}^2\times [0,1], [0, T_a]), 1\leq i\leq 3,$ such that 
\begin{equation}\label{u_prime}
\begin{split}
      &\partial_{t}u ^{\prime}+z\partial_{x}u^{\prime}
+u^{\prime}\partial_{x} (u ^{\prime}+u_{a})+u_{a}\partial_{x}u ^{\prime}\\
      &\quad
+v^{\prime}\partial_{y}(u^{\prime}+u_{a})+v_{a}\partial_{y}u^{\prime}+Row^{\prime}\partial_{z}(u^{\prime}+u_{a})\\
      &\quad
+Row_{a}\partial_{z}u^{\prime}
+Row^{\prime}=\frac{1}{Ro}(v^{\prime}-\partial_{x}p^{\prime})+Ro^3M_{1},
\end{split}
\end{equation} 
\begin{equation}\label{v_prime}
\begin{split}
&\partial_{t}v^{\prime}+z\partial_{x}v^{\prime}
+u^{\prime}\partial_{x} (v^{\prime}+v_{a})+u_{a}\partial_{x}v^{\prime}\\
      &\quad
+v^{\prime}\partial_{y}(v^{\prime}+v_{a})+v_{a}\partial_{y}v^{\prime}+Row^{\prime}\partial_{z}(v^{\prime}+v_{a})\\
      &\quad
+Row_{a}\partial_{z}v^{\prime}
=\frac{1}{Ro}(-u^{\prime}-\partial_{y}p^{\prime})+Ro^3M_{2},
\end{split}
\end{equation} 
\begin{equation}\label{w_prime}
\begin{split}
      &H^2 Ro \big( \partial_{t}w^{\prime}+z\partial_{x}w^{\prime}
+u^{\prime}\partial_{x} (w^{\prime}+w_{a})+u_{a}\partial_{x}w^{\prime}\\
      &\quad
+v^{\prime}\partial_{y}(w^{\prime}+w_{a})+v_{a}\partial_{y}w^{\prime}+Row^{\prime}\partial_{z}(w^{\prime}+w_{a})\\
      &\quad
+Row_{a}\partial_{z}w^{\prime}\big)
=\frac{1}{Ro}(\theta^{\prime}-\partial_{z}  p^{\prime})  +Ro^3 M_{3} ,
\end{split}
\end{equation} 
\begin{equation}\label{theta_prime}
\begin{split}
      &\partial_{t}\theta^{\prime}+z\partial_{x}\theta^{\prime}
+u^{\prime}\partial_{x}(\theta^{\prime}+\theta_{a})+u_{a}\partial_{x}\theta^{\prime}\\
      &\quad
+v^{\prime}\partial_{y}(\theta^{\prime}+\theta_{a})+v_{a}\partial_{y}\theta^{\prime}
+Row^{\prime}\partial_{z}(\theta^{\prime}+\theta_{a})\\
      &\quad
+Row_{a}\partial_{z}\theta^{\prime} 
-v^{\prime}+Bw^{\prime}=Ro^3M_{4},
\end{split}
\end{equation}  
\begin{equation}\label{div}
\begin{split} 
      &\partial_{x}u^{\prime}+\partial_{y}v^{\prime}+Ro\partial_{z}w^{\prime}=0,
\end{split}
\end{equation}
where $M_j, 1 \leq j\leq 4$ only depends on $((u_e^{(i)}, v_e^{(i)}, w_e^{(i-1)}), \theta_e^{(i)}, p_e^{(i)}), 1\leq i\leq 3.$ Here we set $w_e^{(0)}=0.$ In addition, the error equations \eqref{u_prime}-\eqref{div} satisfy the boundary conditions
$$w'=0,\ \ \ w_a=0,\ \ \ {\rm at}\ \ z=0, 1.$$
 \end{Lemma}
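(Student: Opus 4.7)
The plan is to substitute the ansatz $\mathbf{u}=\mathbf{U}+\mathbf{u}_{a}+\mathbf{u}'$ (and analogously for $\theta$ and $p$) into the full nonlinear system \eqref{original}, use that $(\mathbf{U},\Theta,P)$ already satisfies the basic-flow equations so that its contribution drops out, and then perform an asymptotic expansion in the small parameter $Ro$. The approximate profile $(\mathbf{u}_{a},\theta_{a},p_{a})$ is built as a truncated power series in $Ro$ that cancels all residual terms through order $Ro^{2}$; what remains at order $Ro^{3}$ and higher is then collected into the source terms $M_{1},\ldots,M_{4}$. After this cancellation, rearranging the first three momentum equations (which carry $Ro$ in front of the material derivative) produces the displayed $\frac{1}{Ro}(v'-\p_{x}p')$, $\frac{1}{Ro}(-u'-\p_{y}p')$ and $\frac{1}{Ro}(\theta'-\p_{z}p')$ forcing terms in \eqref{u_prime}--\eqref{w_prime}.

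At the leading order $Ro^{0}$, balancing the undifferentiated terms in the first three momentum equations forces the geostrophic relations \eqref{utpN1}, while the remaining temperature and vorticity equations yield precisely \eqref{utpN2}--\eqref{utpN3}; the reduction of Section 3 converts these into the scalar equation \eqref{nonlinear_p} for $p_{N}$, together with the boundary condition \eqref{nonlinear_p_boundary_2}. Lemma \ref{nlex} then supplies a solution $p_{N}\in C([0,T_{1}];H^{3})$, from which $(u_{N},v_{N},\theta_{N})$ are recovered via \eqref{utpN1} and $w_{N}$ is recovered from either \eqref{utpN2} or \eqref{utpN3} (the two being consistent through \eqref{nonlinear_p}).

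At each subsequent order $Ro^{k}$, $k=1,2,3$, the residual gives a linear inhomogeneous system for $(\mathbf{u}_{e}^{(k)},\theta_{e}^{(k)},p_{e}^{(k)})$ whose coefficients are frozen at the shear flow and whose source terms depend only on the previously constructed $(\mathbf{u}_{N},\theta_{N},p_{N})$ and $(\mathbf{u}_{e}^{(j)},\theta_{e}^{(j)},p_{e}^{(j)})$, $j<k$. Specifically, the two horizontal momentum equations impose $v_{e}^{(k)}-\p_{x}p_{e}^{(k)}=F_{1}^{(k)}$ and $u_{e}^{(k)}+\p_{y}p_{e}^{(k)}=F_{2}^{(k)}$; the vertical momentum equation fixes $\theta_{e}^{(k)}-\p_{z}p_{e}^{(k)}=F_{3}^{(k)}$; the temperature equation combined with the divergence-free constraint produces an anisotropic Poisson-type problem $\Delta_{B}p_{e}^{(k)}=G^{(k)}$; and the constraint $w_{e}^{(k-1)}=0$ at $z=0,1$ becomes boundary data for $p_{e}^{(k)}$. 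These linear problems are solvable in $H^{3}$ by standard elliptic/transport theory once the sources are controlled, which follows inductively from the regularity of $(\mathbf{u}_{N},\theta_{N},p_{N})$. The reason $w_{a}$ is expanded one order less than the other components is the factor $Ro$ multiplying $\p_{z}w$ in the incompressibility condition: at order $Ro^{k}$ this couples $(u,v)$ of order $k$ with $w$ of order $k-1$, so that $w_{e}^{(2)}$ already absorbs the contribution of $(u_{e}^{(3)},v_{e}^{(3)})$.

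The main technical step is to ensure at each level that the boundary condition $w_{e}^{(k-1)}=0$ on $\{z=0,1\}$ is compatible with the interior elliptic problem for $p_{e}^{(k)}$; this is the higher-order analogue of the compatibility encoded in \eqref{nonlinear_p_boundary_2} at order zero and requires that the time derivative of the boundary trace of $p_{e}^{(k)}$ match the one produced by the temperature/vorticity balance in the interior. Once this cascade is closed on a common time interval $[0,T_{a}]$ (with $T_{a}\le T_{1}$ chosen so that all iterated sources remain in $H^{3}$), substituting the completed ansatz back into \eqref{original} leaves only residuals that are formally of order $Ro^{3}$ or higher; a direct bookkeeping of these residuals identifies them as $Ro^{3}M_{1},\ldots,Ro^{3}M_{4}$, each $M_{j}$ depending only on the listed corrections, with $w_{e}^{(0)}=0$ by virtue of the leading-order balance. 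Finally, the boundary conditions $w'=0$ and $w_{a}=0$ at $z=0,1$ are automatic since $w_{N}=0$ there and each $w_{e}^{(j)}$ has been constructed to vanish on these planes.
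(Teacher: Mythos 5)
Your proposal follows essentially the same route as the paper: substitute the ansatz into \eqref{original}, match powers of $Ro$, obtain the geostrophic system \eqref{utpN1}--\eqref{utpN3} for $(\mathbf{u}_N,\theta_N,p_N)$ solved by Lemma \ref{nlex}, solve the corrections $(\mathbf{u}_e^{(k)},\theta_e^{(k)},p_e^{(k)})$ order by order as linear problems with the boundary condition $w_e^{(k-1)}|_{z=0,1}=0$ feeding into the elliptic problem for $p_e^{(k)}$, and collect the leftover residual into $Ro^3M_j$. The only caveat is a small bookkeeping offset: because the vertical momentum equation carries the prefactor $Ro^2H^2$, its order-$Ro^{k}$ balance determines $\theta_e^{(k+2)}-\p_zp_e^{(k+2)}$ rather than $\theta_e^{(k)}-\p_zp_e^{(k)}$ (e.g.\ the $O(Ro^{-1})$ balance gives $\p_zp_e^{(1)}=\theta_e^{(1)}$), exactly as your own remark about the $Ro\,\p_zw$ term in the divergence constraint already anticipates for $w$.
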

 
 \begin{Remark}
 The expansion order $O(Ro^2)$ is enough in the following  proof  (see the proof in Section \ref{subsec3.2} ). Here we just give an illustration  to show that the expansion can be extended order by order. 
 \end{Remark}
 {\it Proof.}
Substituting the approximate form \eqref{app} into the nonlinear system \eqref{original}, we obtain
\begin{align} 
      & \partial_{t} (u _{N} +u_{e}) 
+ z\partial_{x} (u _{N}+u_{e})
+(u _{N}+u_{e}) \partial_{x} (u _{N}+u_{e}) 
\notag\\  
      &\quad+(v _{N} +v_{e})\partial_{y} (u _{N}+u_{e}) 
+Ro(w _{N}+w_{e}) \partial_{z} (u _{N}+u_{e})\notag\\
      &\quad+Ro(w _{N}+w_{e}) -\frac{1}{Ro} (v _{N}+v_{e})=- \frac{1}{Ro}\partial_{x} (p_{N}+p_{e}) ,\label{line1}\\
     &\partial_{t} (v _{N}+v_{e}) %line 2
+z\partial_{x} (v _{N}+v_{e})
+   (u _{N}+u_{e}) \partial_{x} (v _{N}+v_{e})\notag\\  
      &\quad+  (v  _{N}+v_{e})\partial_{y} (v _{N}+v_{e})
+Ro(w _{N}+w_{e}) \partial_{z} (v _{N}+v_{e})\notag\\ 
      &\quad+ \frac{1}{Ro}(u _{N}+u_{e})
 =- \frac{1}{Ro}\partial_{y} (p _{N}+p_{e}) ,\label{line2}\\
 &H^2 \Big( \partial_{t} (w _{N}+w_{e})  
+ z\partial_{x} (w _{N}+w_{e})
+   (u _{N}+u_{e}) \partial_{x} (w _{N}+w_{e})\notag\\
 &\quad +  (v _{N}+v_{e})\partial_{y} (w _{N}+w_{e}) 
+Ro(w _{N}+w_{e}) \partial_{z} (w _{N}+w_{e})\Big) \notag\\
&\quad=\frac{1}{R o^2}(-\partial_{z} (p _{N}+p_{e}) +(\theta _{N}+\theta_{e})),\label{line3}\\
      & \partial_{t}(\theta_{N} +\theta_{e}) 
+ z\partial_{x}(\theta_{N} +\theta_{e}) 
+ (u _{N}+u_{e}) \partial_{x}(\theta_{N} +\theta_{e})\notag \\
       &\quad+  (v _{N}+v_{e}) \partial_{y} (\theta_{N} +\theta_{e}) 
+Ro (w _{N}+w_{e}) \partial_{z} (\theta_{N} +\theta_{e})\notag\\
      &\quad- (v _{N}+v_{e})+B (w _{N}+w_{e})  =0,\label{line4}\\
      &\partial_{x} (u _{N}+u_{e}) +\partial_{y}(v _{N}+v_{e})+R o \partial_{z} (w _{N}+w_{e}) =0.\label{line5}
\end{align}
It follows from the third equation \eqref{line3} that $$\p_z p_N=\theta_N \ {\rm \  for \  the \ order}\  O(\frac{1}{Ro^2}),$$ and 
$$\p_z p_e^{(1)}=\theta^{(1)}_e\ {\rm  for \ the \ order }\ O(\frac{1}{Ro}). $$ 

Then the order $O(1/Ro)$ for the equations \eqref{line1} and \eqref{line2},   and $O(1)$ order for the vorticity $\p_y\eqref{line1}-\p_x\eqref{line2}$  and $O(1)$ order for  the third equation \eqref{line3},  just form the equations \eqref{utpN1}-\eqref{utpN3}, whose  existence theory   and growing mode have been verified in Section \ref{sec2}.

In fact, using the approximate scheme \eqref{eap}, we have the following equations for the coefficient $O(1):$
%\begin{align} 
%      &   \underbrace{\partial_{t} u _{N} 
%+ z\partial_{x} u  _{N} 
%+u _{N}\partial_{x} u  _{N}
%+v _{N}\partial_{y} u _{N} }_{L_1}
% =v_{e}^{(1)}- \partial_{x} p_{e}^{(1)} ,\label{auv1}\\
%      & \underbrace{\partial_{t} v _{N}  
%+z\partial_{x} v _{N} 
%+   u _{N}  \partial_{x}  v _{N} 
%+   v _{N} \partial_{y} v  _{N}    }_{L_2}
%=   -u_{e}^{(1)}- \partial_{y} p_{e}^{(1)} ,\label{auv2}\\
%& H^2 \Big( \partial_{t}  w _{N}  
%+ z\partial_{x}  w _{N} 
%+  u _{N}  \partial_{x} w _{N} 
%+  v _{N}\partial_{y}  w _{N}   \Big) =\theta_{e}^{(2)}-\partial_{z}  p_{e} ^{(2)} ,\\
%      & \partial_{t} \theta_{N} 
%+ z\partial_{x} \theta_{N}   
%+  u _{N} \partial_{x}\theta_{N}   
%+  v _{N} \partial_{y}  \theta_{N}  
%-  v _{N}  +B w_{N} =0,\\
%      &\partial_{x}  u_N  +\partial_{y} v_N  =0.
%\end{align}
\begin{align} 
      &   \underbrace{\partial_{t} u _{N} 
+ z\partial_{x} u  _{N} 
+u _{N}\partial_{x} u  _{N}
+v _{N}\partial_{y} u _{N} }_{L_1}
 =v_{e}^{(1)}- \partial_{x} p_{e}^{(1)} ,\notag\\
      & \underbrace{\partial_{t} v _{N}  
+z\partial_{x} v _{N} 
+   u _{N}  \partial_{x}  v _{N} 
+   v _{N} \partial_{y} v  _{N}    }_{L_2}
=   -u_{e}^{(1)}- \partial_{y} p_{e}^{(1)} ,\notag\\
& H^2 \Big( \partial_{t}  w _{N}  
+ z\partial_{x}  w _{N} 
+  u _{N}  \partial_{x} w _{N} 
+  v _{N}\partial_{y}  w _{N}   \Big) =\theta_{e}^{(2)}-\partial_{z}  p_{e} ^{(2)} ,\label{auv1}\\
      & \partial_{t} \theta_{N} 
+ z\partial_{x} \theta_{N}   
+  u _{N} \partial_{x}\theta_{N}   
+  v _{N} \partial_{y}  \theta_{N}  
-  v _{N}  +B w_{N} =0,\notag\\
      &\partial_{x}  u_N  +\partial_{y} v_N  =0.\notag
\end{align}
Then we collect the
 order  $O(Ro)$   in  $\mathbf{u}_{e}$, $\theta_{e}$ and $p_{e}$ to obtain
\begin{align} \label{u_e_1}
\begin{split}
      &   \partial_{t} u_{e}^{(1)} + z\partial_{x} u_{e}^{(1)}
+ u _{N} \partial_{x} u_{e}^{(1)} +u_{e}^{(1)} \partial_{x} u _{N} 
+ v _{N} \partial_{y} u_{e}^{(1)} \\
&\quad +v_{e}^{(1)}\partial_{y} u _{N}
+w _{N} \partial_{z} u _{N}
+w _{N}
=v_{e}^{(2)}- \partial_{x} p_{e}^{(2)},\\
 & \partial_{t}  v_{e}^{(1)}+z\partial_{x}  v_{e} ^{(1)}
      + u _{N} \partial_{x} v_{e}^{(1)} +u_{e}^{(1)} \partial_{x} v _{N} 
+ v _{N} \partial_{y} v_{e}^{(1)} \\
&\quad+ v_{e}^{(1)}\partial_{y} v _{N} 
+w_{N} \partial_{z} v _{N}  
=-u_{e}^{(2)} - \partial_{y} p_{e}^{(2)} ,\\
& H^2 \Big( \partial_{t} w_{e}^{(1)} + z\partial_{x}w_{e}^{(1)}
+ u _{N} \partial_{x} w_{e}^{(1)} +u_{e}^{(1)} \partial_{x} w _{N} 
+ v _{N} \partial_{y} w_{e}^{(1)} \\
&\quad+v_{e}^{(1)}\partial_{y} w _{N} 
+w _{N} \partial_{z} w_{N}\Big)
=\theta_{e}^{(3)}-\partial_{z} p_{e}^{(3)} ,\\
      & \partial_{t} \theta_{e}^{(1)}
+z\partial_{x} \theta_{e}^{(1)}
+ u _{N} \partial_{x} \theta_{e}^{(1)} +u_{e}^{(1)} \partial_{x} \theta _{N} 
+ v _{N} \partial_{y}\theta_{e}^{(1)}\\
&\quad +v_{e}^{(1)}\partial_{y} \theta  _{N} 
+w _{N} \partial_{z} \theta _{N}
-v_{e}^{(1)}+Bw_{e}^{(1)}=0, \\
      &\partial_{x}  u_{e}^{(1)}  +\partial_{y} v_{e}^{(1)}+\partial_{z}w _{N}  =0.
\end{split}
\end{align}
Similarly, we collect the order $O(Ro^2)$ and $O(Ro^3)$, respectively, to have 
\begin{align*}  
     &\partial_{t} u_{e}^{(2)} + z\partial_{x}  u_{e}^{(2)}
+ u_{e}^{(2)}  \partial_{x} u _{N}+2u_{e}^{(1)}  \partial_{x} u_{e}^{(1)}
+u _{N}  \partial_{x} u_{e}^{(2)} \\
&\quad+ v_{e}^{(2)}  \partial_{y} u _{N}+2v_{e}^{(1)}  \partial_{y} u_{e}^{(1)}
+v _{N}  \partial_{y} u_{e}^{(2)} \\  
     &\quad+ w_{e}^{(1)}  \partial_{z} u _{N}+w _{N}  \partial_{z} u_{e}^{(1)}  +w^{(1)}_{e} =v_{e}^{(3)}-\partial_{x} p_{e}^{(3)} ,\\
     &\partial_{t} v_{e}^{(2)} + z\partial_{x}  v_{e}^{(2)}
+ u_{e}^{(2)}  \partial_{x} v _{N}+2u_{e}^{(1)}  \partial_{x} v_{e}^{(1)}
+u _{N}  \partial_{x} v_{e}^{(2)} \\
&\quad + v_{e}^{(2)}  \partial_{y} v _{N}+2v_{e}^{(1)}  \partial_{y} v_{e}^{(1)}
+v _{N}  \partial_{y} v_{e}^{(2)} \\  
     &\quad+ w_{e}^{(1)}  \partial_{z} v _{N}+w _{N}  \partial_{z} v_{e}^{(1)} =-u_{e}^{(3)}-\partial_{y} p_{e}^{(3)},\\
     & H^2 \Big(\partial_{t} w_{e}^{(2)} + z\partial_{x}  w_{e}^{(2)}
+ u_{e}^{(2)}  \partial_{x} w _{N}+2u_{e}^{(1)}  \partial_{x} w_{e}^{(1)}\\
&\quad +u _{N}  \partial_{x} w_{e}^{(2)} 
+v_{e}^{(2)}  \partial_{y} w _{N}+2v_{e}^{(1)}  \partial_{y} w_{e}^{(1)} \\  
&\quad+v _{N}  \partial_{y}w_{e}^{(2)}+ w_{e}^{(1)}  \partial_{z} w _{N}+w _{N}  \partial_{z} w_{e}^{(1)} \Big)
=\theta_{e}^{(4)}-\partial_{z} p_{e}^{(4)} , 
\end{align*} 
\begin{align*} 
\begin{split}
     & \partial_{t} \theta_{e}^{(2)}+z\partial_{x} \theta_{e}^{(2)}
+ u_{e}^{(2)}  \partial_{x} \theta _{N}+2u_{e}^{(1)}  \partial_{x}\theta_{e}^{(1)}+u _{N}  \partial_{x} \theta_{e}^{(2)} \\
&\quad + v_{e}^{(2)}  \partial_{y} \theta _{N}+2v_{e}^{(1)}  \partial_{y} \theta_{e}^{(1)}+v _{N}  \partial_{y} \theta_{e}^{(2)} \\   
     &\quad +  w _{N} \partial_{z} \theta_{e}^{(1)}
+w_{e}^{(1)} \partial_{z} \theta_{N}  -v_{e}^{(2)}+ B w_{e} ^{(2)}=0,\\
     &\partial_{x} u_{e}^{(2)} +\partial_{y}v_{e}^{(2)}+\partial_{z} w_{e}^{(1)} =0,
\end{split}
\end{align*} 
and
\begin{align*} 
     &\partial_{t} u_{e}^{(3)} + z\partial_{x}  u_{e}^{(3)}
+ u_{e}^{(3)}  \partial_{x} u _{N}+3u_{e}^{(1)}  \partial_{x} u_{e}^{(2)}
+3u_{e}^{(2)}  \partial_{x} u_{e}^{(1)}\\
&\quad+u _{N}  \partial_{x} u_{e}^{(3)} 
+ v_{e}^{(3)}  \partial_{y} u _{N}+3v_{e}^{(1)} \partial_{y} u_{e}^{(2)}\\
     &\quad +3v_{e}^{(2)}  \partial_{y} u_{e}^{(1)}
+v _{N}  \partial_{y} u_{e}^{(3)} + w_{e}^{(2)}  \partial_{z} u _{N}+2w_{e}^{(1)}  \partial_{z} u_{e}^{(1)}\\
&\quad+w _{N}  \partial_{z} u_{e}^{(2)}  +w_{e}^{(2)}=v_{e}^{(4)}-\partial_{x} p_{e}^{(4)} ,\\
     &\partial_{t} v_{e}^{(3)} + z\partial_{x}  v_{e}^{(3)}
+ u_{e}^{(3)}  \partial_{x} v _{N}+3u_{e}^{(1)}  \partial_{x} v_{e}^{(2)}+3u_{e}^{(2)}  \partial_{x} v_{e}^{(1)}\\
&\quad+u _{N}  \partial_{x} v_{e}^{(3)} 
+ v_{e}^{(3)}  \partial_{y} v _{N}+3v_{e}^{(1)} \partial_{y} v_{e}^{(2)}\\
     &\quad+3v_{e}^{(2)}  \partial_{y} v_{e}^{(1)}
+v  _{N}  \partial_{y} v_{e}^{(3)}
+ w_{e}^{(2)}  \partial_{z} v _{N}\\
&\quad
+ 2w_{e}^{(1)}  \partial_{z} v_{e}^{(1)}+w _{N}  \partial_{z} v_{e}^{(2)} =-u_{e}^{(4)}-\partial_{y} p_{e}^{(4)} ,\\
     %& H^2 \big(\partial_{t} w_{e}^{(3)} + z\partial_{x}  w_{e}^{(3)}  
%+ u_{e}^{(3)}  \partial_{x} w _{N}+3u_{e}^{(1)} \partial_{x} w_{e}^{(2)}\\
%&\quad+3u_{e}^{(2)}  \partial_{x} w_{e}^{(1)}+u _{N}  \partial_{x} w_{e}^{(3)} 
%+ v_{e}^{(3)}  \partial_{y} w _{N}\\
 %    &\quad+3v_{e}^{(1)} \partial_{y} w_{e}^{(2)}+3v_{e}^{(2)}  \partial_{y} w_{e}^{(1)}
%+v _{N}  \partial_{y} w_{e}^{(3)}
%+ w_{e}^{(2)}  \partial_{z} w _{N}\\
%&\quad+ 2w_{e}^{(1)}  \partial_{z} w_{e}^{(1)}+w _{N}  \partial_{z} w_{e}^{(2)}\big)
 %=u_{e}^{(5)}-\partial_{y} p_{e}^{(5)} ,\\
     & \partial_{t} \theta_{e}^{(3)}+z\partial_{x} \theta_{e}^{(3)}
+ u_{e}^{(3)}  \partial_{x} \theta _{N}+3u_{e}^{(1)}  \partial_{x} \theta_{e}^{(2)}+3u_{e}^{(2)}  \partial_{x} \theta_{e}^{(1)}\\
&\quad +u _{N}  \partial_{x} \theta_{e}^{(3)} 
+ v_{e}^{(3)}  \partial_{y} \theta _{N}+3v_{e}^{(1)}\partial_{y} \theta_{e}^{(2)}+3v_{e}^{(2)}  \partial_{y} \theta_{e}^{(1)}
+v _{N}  \partial_{y} \theta_{e}^{(3)}\\
     &\quad
+ w_{e}^{(2)}  \partial_{z} \theta _{N}
+ 2w_{e}^{(1)}  \partial_{z} \theta_{e}^{(1)}+w_{N}  \partial_{z} \theta_{e}^{(2)}  -v_{e}^{(3)}+ B w_{e} ^{(3)}=0,\\
     &\partial_{x} u_{e}^{(3)} +\partial_{y}v_{e}^{(3)}+ \partial_{z} w_{e}^{(2)} =0.
\end{align*}
Combining $\p_z p_e^{(1)}=\theta^{(1)}_{e}$ with $\eqref{auv1}_{1}$ $\eqref{auv1}_{2}$  $\eqref{u_e_1}_4$ and 
$\eqref{u_e_1}_5$, we get
\begin{align} 
      &  \theta_{e}^{(1)}=\partial_{z}  p_{e} ^{(1)},\quad 
v_{e}^{(1)}=L_1+ \partial_{x} p_{e}^{(1)} ,\quad
u_{e}^{(1)}=-L_2-\partial_{y} p_{e}^{(1)},\label{L_1_L_2} \\
      & (\partial_{t}+z \partial_{x}+u_{N}\partial_{x}+v_{N}\partial_{y}) \theta_{e}^{(1)}
 +u_{e}^{(1)} \partial_{x} \theta _{N}+v_{e}^{(1)}\partial_{y} \theta  _{N}
 +w _{N} \partial_{z} \theta _{N}  \notag\\ 
     &\quad  
-v_{e}^{(1)}+Bw^{(1)}_{e}=0, \\
      & \partial_{x}  u_{e}^{(1)}  +\partial_{y} v_{e}^{(1)}+\partial_{z}w _{N}=0.
\end{align}
Next, together with $\eqref{u_e_1}_{5}$,  we calculate plane vorticity $\p_y\eqref{u_e_1}_1-\p_x\eqref{u_e_1}_2$ to obtain 
\begin{align}
    &(\partial_{t}+z \partial_{x}+u_{N}\partial_{x}+v_{N}\partial_{y})(\partial_{x} v_{e}^{(1)}-\partial_{y}u_{e}^{(1)})
+u_{e}^{(1)}\partial_{x}(\partial_{x} v_{N}  -\partial_{y} u_{N}  )\notag\\
   &\quad+v_{e}^{(1)}\partial_{y}(\partial_{x} v_{N}  -\partial_{y} u_{N}  ) +(\partial_{x} u_{e}^{(1)}+\partial_{y} v_{e}^{(1)} )
(\partial_{x} v_{N}  -\partial_{y} u_{N} )\notag\\
   &\quad+\partial_{x}(w_{N}\partial_{z}v_{N})-\partial_{y}(w_{N}\partial_{z}u_{N}+w_{N}) 
=\p_z w_{e}^{(1)},\label{auv-vor}
\end{align}
Finally, we rewrite \eqref{L_1_L_2}-\eqref{auv-vor}, by elimination of $ w_e^{(1)}$,  to get
\begin{align}
    &(\partial_{t}+z \partial_{x}+u_{N}\partial_{x}+v_{N}\partial_{y})\Big(B(\partial_{xx} p_{e}^{(1)}+\partial_{yy}p_{e}^{(1)})+\partial_{zz}p_{e}^{(1)}\Big)\notag\\
   &\quad 
+ \Big(\partial_{y}\big(B(\partial_{x} v_{N}  -\partial_{y} u_{N})+\partial_{z}\theta_{N}\big)+\partial_{y}\theta_{N}\partial_{z}\Big)\partial_{x}p_{e}^{(1)}\notag\\
&\quad
-\Big(\partial_{x} \big(B(\partial_{x} v_{N}  -\partial_{y} u_{N})+\partial_{z}\theta_{N}
\big)+\partial_{x}\theta_{N}\partial_{z}\Big)\partial_{y}p_{e}^{(1)}\notag\\
&\quad +BL_{3}+\partial_{z}L_{4}=0 ,\label{pe1}
\end{align}
where
\begin{align*} 
\begin{split}
L_{3}&=(\partial_{t}+z \partial_{x} +u_{N}\partial_{x}+v_{N}\partial_{y})(\partial_{x}L_1+\partial_{y}L_2)\\ &\quad
+(-L_2\partial_{x}+L_1\partial_{y})(\partial_{x} v_{N}  -\partial_{y} u_{N}  )\\
    &\quad+(-\partial_{x}L_2+\partial_{y}L_1)(\partial_{x} v_{N}  -\partial_{y} u_{N} )\\ 
    &\quad+\partial_{x}(w_{N}\partial_{z}v_{N})
-\partial_{y}(w_{N}\partial_{z}u_{N}+w_{N})  ,\\
L_{4}&=(-L_2\partial_{x}+L_1\partial_{y}) \theta _{N}+w _{N} \partial_{z} \theta _{N}
-L_1.
\end{split}
\end{align*}
Now we have  a {\it linear equation} of $p_e^{(1)}$ governed by  \eqref{pe1}  satisfying  the boundary conditions $w_e^{(1)}|_{z=0,1}=0:$ 
\begin{align*}
&\partial_{t} \theta_{e}^{(1)}
+z\partial_{x} \theta_{e}^{(1)}
+ u _{N} \partial_{x} \theta_{e}^{(1)} +u_{e}^{(1)} \partial_{x} \theta _{N} 
+ v _{N} \partial_{y}\theta_{e}^{(1)}\\
&\quad +v_{e}^{(1)}\partial_{y} \theta  _{N} 
-v_{e}^{(1)}=0, \ \ \ {\rm at}\ \ \ z=0,1. 
\end{align*}
 Then the  existence  of $p_e^{(1)}$ and then $((u_e^{(1)}, v_e^{(1)}, w_e^{(1)}), \theta_e^{(1)})$  follows from a standard local well-posedness argument. Similarly, $((u_e^{(i)}, v_e^{(i)}, w_e^{(i)}), \theta_e^{(i)}, p_e^{(i)}),$ $ i\geq 1,$ can be identified order by order.   Therefore, we can deduce \eqref{u_prime}-\eqref{div} from \eqref{line1}-\eqref{line5}. The proof of Lemma \ref{lem4.1} is completed.  $\square$

\subsection{Estimates for  the error terms}\label{subsec3.2}
 To begin  with, we state a useful pressure estimate  in the following 
 proposition, which can be deduced from   \cite{TR} by R. Temam.
 \begin{Proposition}[cf. \cite{TR}]\label{prop}
Let $m>5/2$  and  $\textbf{f}\in  H^m(\mathbb{T}^2\times [0,1])$ be a vector function. If $P(x, y, z)$ and $\textbf{u}=(u_1, u_2, u_3)(x, y, z)$   satisfy that
\begin{align*}
&\Delta_{\mathcal{H}} P= {\rm div}  \textbf{f}-{\rm div} (\textbf{u}\cdot\nabla \textbf{u})\ \ \ {\rm in}\ \ \ \ \mathbb{R}^2\times [0,1] \ {\rm or}\  \mathbb{T}^2\times [0,1],\\
&{\rm div} \textbf{u}=0,\\
&\frac{\p P}{\p z}= \textbf{f}\cdot \textbf{n}\ \ \ \ \ \ \ \ \ \ \ \ \ \ \ \ \ \ \ \  \ \ \ {\rm at}\ \ \ \  z=0, 1,
\end{align*}
where $\Delta_\mathcal{H}:=\mathcal{H}^2\p_{xx}^2+\mathcal{H}^2\p_{yy}^2+\p_{zz}$ with positive constant $\mathcal{H},$    
$\textbf{n}=(0, 0, -1)$ at $z=0$ and $\textbf{n}=(0, 0, 1)$ at $z=1,$ respectively.
Then it holds 
\begin{align*}
\|\nabla P \|_{H^m}\leq C \left(\|\textbf{f} \|_{H^m}+\|\textbf{u}\|_{H^m}^2 \right),
\end{align*}
where $C$ is a positive constant.
 \end{Proposition}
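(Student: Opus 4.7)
The plan is to view the equation for $P$ as a Neumann boundary value problem for the uniformly elliptic operator $\Delta_H=H^2(\p_{xx}+\p_{yy})+\p_{zz}$ on the slab $\mathbb{R}^2\times[0,1]$ (or $\mathbb{T}^2\times[0,1]$) and to combine classical elliptic regularity with Moser-type nonlinear estimates. Writing the problem as $\Delta_H P=F$ with source $F:=\mathrm{div}\,\mathbf{f}-\mathrm{div}(\mathbf{u}\cdot\nabla\mathbf{u})$ and co-normal data $g:=\mathbf{f}\cdot\mathbf{n}$ at $z=0,1$, the strategy is first to reduce the desired bound to the standard Neumann estimate
\[
\|\nabla P\|_{H^m}\le C\bigl(\|F\|_{H^{m-1}}+\|g\|_{H^{m-1/2}(\partial\Omega)}\bigr),
\]
and then to control the right-hand side by $\|\mathbf{f}\|_{H^m}+\|\mathbf{u}\|_{H^m}^2$.

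For the nonlinear term the key simplification is to exploit $\mathrm{div}\,\mathbf{u}=0$ to rewrite
\[
\mathrm{div}(\mathbf{u}\cdot\nabla\mathbf{u})=\p_i(u_j\p_j u_i)=\p_i u_j\,\p_j u_i,
\]
a quadratic form in $\nabla\mathbf{u}$ in which the ``second derivative times first derivative'' terms have cancelled. Since $m>5/2$ gives $m-1>3/2$, so that $H^{m-1}$ is a Banach algebra on the slab, the Moser estimate yields $\|\p_i u_j\,\p_j u_i\|_{H^{m-1}}\le C\|\nabla\mathbf{u}\|_{H^{m-1}}^2\le C\|\mathbf{u}\|_{H^m}^2$. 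Together with the trivial bound $\|\mathrm{div}\,\mathbf{f}\|_{H^{m-1}}\le C\|\mathbf{f}\|_{H^m}$ and the trace estimate $\|g\|_{H^{m-1/2}(\partial\Omega)}\le C\|\mathbf{f}\|_{H^m}$, this delivers the claimed inequality once the elliptic estimate is in hand. For the latter I would take the Fourier transform (or series) in $(x,y)$ and reduce to the one-parameter family of ODEs $\p_{zz}\hat P(\xi,z)-H^2|\xi|^2\hat P(\xi,z)=\hat F(\xi,z)$ on $[0,1]$ with $\p_z\hat P|_{z=0,1}=\hat g^{\pm}(\xi)$. Either an energy argument (testing against $\hat P$ and $\p_z\hat P$) or the explicit Green's function built from $\cosh(H|\xi|z)$ and $\sinh(H|\xi|z)$ produces, uniformly in $\xi$, a weighted bound of the form
\[
(1+H^2|\xi|^2)^{1/2}\|\hat P(\xi,\cdot)\|_{L^2_z}+\|\p_z\hat P(\xi,\cdot)\|_{L^2_z}\le C\bigl(\|\hat F(\xi,\cdot)\|_{L^2_z}+|\hat g^+(\xi)|+|\hat g^-(\xi)|\bigr);
\]
multiplying by $(1+|\xi|^2)^m$, integrating or summing in $\xi$, and iterating the ODE to read off additional $\p_z$-derivatives yields the $H^m$ bound on $\nabla P$.

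The main technical obstacle is the zero horizontal mode $\xi=0$, where $\Delta_H$ degenerates to $\p_{zz}$ and a one-dimensional kernel of constants appears. Solvability then forces the compatibility condition $\int_0^1\hat F(0,z)\,dz=\hat g^+(0)-\hat g^-(0)$, which by the divergence theorem reduces to $(\mathbf{u}\cdot\nabla\mathbf{u})\cdot\mathbf{n}|_{z=0,1}=0$; this is guaranteed by the non-penetration condition $u_3|_{z=0,1}=0$ in force in the application. After normalizing $P$ to have zero horizontal mean on each $z$-slice (a choice that does not alter $\nabla P$), the argument closes and recovers the pressure estimate of \cite{TR}. A secondary subtlety is that without the divergence-form rewriting above one would be forced to bound $\mathrm{div}(\mathbf{u}\cdot\nabla\mathbf{u})$ in $H^{m-1}$ by pairing $\mathbf{u}$ with its second derivative, losing a derivative and requiring a more delicate commutator analysis; the quadratic expression in $\nabla\mathbf{u}$ avoids this entirely.
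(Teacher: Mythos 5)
Your proposal is correct in substance, but note that the paper does not actually prove this proposition: it is simply attributed to Temam \cite{TR}, and the argument you reconstruct (divergence-free cancellation $\mathrm{div}(\mathbf{u}\cdot\nabla\mathbf{u})=\partial_i u_j\,\partial_j u_i$, the algebra property of $H^{m-1}$ for $m-1>3/2$, and elliptic regularity for the co-normal Neumann problem) is precisely Temam's, so you are supplying the omitted proof rather than diverging from it.

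One fine point deserves care on the unbounded slab $\mathbb{R}^2\times[0,1]$. The abstract Neumann estimate $\|\nabla P\|_{H^m}\le C(\|F\|_{H^{m-1}}+\|g\|_{H^{m-1/2}(\partial\Omega)})$ that you use as the first reduction is \emph{not} uniform at low horizontal frequencies for generic $F\in H^{m-1}$: after Fourier transform in $(x,y)$ the solution behaves like $|\xi|^{-2}\bigl(\int_0^1\hat F\,dz-\hat g^+-\hat g^-\bigr)$ near $\xi=0$, so $\|\,|\xi|\hat P\,\|_{L^2}$ can diverge. What saves the estimate is exactly the structure you invoke later: $F$ is a divergence and $g$ is the matching normal trace, so after one integration by parts the data enter only through $\mathbf{f}-\mathbf{u}\cdot\nabla\mathbf{u}$ itself and the defect $\int_0^1\hat F\,dz-\hat g^+-\hat g^-$ is $O(|\xi|)$. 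Your energy argument (testing the ODE against $\hat P$ and absorbing the boundary terms against $\hat g^\pm$) implements this correctly and yields $\|\partial_z\hat P\|_{L^2_z}^2+|\xi|^2\|\hat P\|_{L^2_z}^2\le C\bigl(\|\hat{\mathbf{f}}\|_{L^2_z}^2+\dots\bigr)$ uniformly in $\xi$; you should present that, rather than the generic $\|F\|_{H^{m-1}}$ reduction, as the actual first step. With that rearrangement (and your correct observations about the $\xi=0$ compatibility condition, which is automatic here since a solution is assumed to exist, and the normalization of $P$ modulo constants), the proof is complete.
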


Throughout the rest parts, we will use $C_a$ to denote a generic constant depending on 
$(u_a, v_a, w_a)$, $p_a$ and $\theta_a.$

\begin{Lemma}\label{lem4.2}
 For $t>0,$ we have
\begin{align}
&\Vert \sqrt{B}  H(u^{\prime}, v',   Ro w' )(\cdot, t)\Vert^{2}+\|\theta'(\cdot, t)\|^2
\notag\\
&\quad \leq C_a \int_0^t \mathcal{P}\left(\|(\sqrt{B}H u', \sqrt{B} H v', \sqrt{B} Ro w', \theta')(\cdot, t)\|^2\right)dt +\|\theta'(\cdot, 0)\|^2\notag\\
&\quad \quad+ \Vert  \sqrt{B} (u^{\prime},  v',   Ro w')(\cdot, 0)\Vert^{2}+C_a Ro^6 ,\label{lem828}
\end{align}
where $\mathcal{P}(\cdot)$ is a polynomial function.  Hence it holds that 
\begin{align}
\Vert (u^{\prime}, v',    Ro w', \theta' )(\cdot, t)\Vert^{2}\leq C_a(B, H) Ro^6, \ \  \ 0<t<T'
\end{align}
for a time $T' \leq T_a.$ Here $C_a(B,H)$ represents a  generic constant  depending on $B$ and $H,$ additionally. 
\end{Lemma}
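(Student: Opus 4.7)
The plan is to perform a weighted $L^2$ energy estimate on the error system \eqref{u_prime}--\eqref{div} in which the singular $\tfrac{1}{Ro}$ terms cancel algebraically, and then close the resulting Gr\"onwall--type inequality on a short time interval via a continuity argument. Concretely, I would pair \eqref{u_prime} with $Bu'$, \eqref{v_prime} with $Bv'$, \eqref{w_prime} with $BRo\,w'$, and \eqref{theta_prime} with $\theta'$, integrate over the fluid domain, and sum. The time-derivative terms assemble (up to harmless $H$-factors absorbed in $C_a(B,H)$) into $\tfrac{1}{2}\tfrac{d}{dt}\bigl(B\|u'\|^2+B\|v'\|^2+BH^2Ro^2\|w'\|^2+\|\theta'\|^2\bigr)$, which is the energy on the left of \eqref{lem828}.

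Three cancellations drive the argument. First, the Coriolis contributions $\tfrac{B}{Ro}(u'v'-v'u')=0$ vanish pointwise. Second, the pressure contributions collapse into $-\tfrac{B}{Ro}\int(u'\partial_xp'+v'\partial_yp'+Ro\,w'\partial_zp')\,dxdydz$, which is zero after one integration by parts thanks to the divergence-free relation \eqref{div} together with the boundary condition $w'|_{z=0,1}=0$. Third, the buoyancy coupling $Bw'\theta'$ produced by the $w'$ equation exactly cancels the matching term coming from the $\theta'$ equation. The only residual linear exchange is $-(v',\theta')$, which is absorbed into $\|v'\|^2+\|\theta'\|^2$ by Young's inequality.

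The transport and nonlinear terms are then controlled in a standard way. The drift $z\partial_x$ integrates to zero, and pure self-advections such as $(u'\partial_x+v'\partial_y+Ro\,w'\partial_z)(|u'|^2/2)$ vanish after integration by parts using \eqref{div} and $w'|_{z=0,1}=0$. Cross terms against the smooth approximate profile $(u_a,v_a,w_a,\theta_a)$ produce contributions that are linear in the energy, with constants $C_a$ coming from the $H^3$ bounds on the approximate solution in Lemma \ref{lem4.1}. Residual self-interactions in which a derivative falls on $(u',v',w',\theta')$ are controlled by $\mathcal{P}(\|(u',v',Ro\,w',\theta')\|^2)$ via Sobolev embedding in $H^3$, a bound furnished by the local strong-solution theory for \eqref{original}. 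Finally, the source terms $Ro^3M_j$ are absorbed by Cauchy--Schwarz, generating $C_aRo^6+\varepsilon\|(\cdot)\|^2$. Gathering all contributions yields the integral inequality \eqref{lem828}.

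For the concluding $O(Ro^6)$ bound, I would invoke that the ansatz in Lemma \ref{lem4.1} matches the true solution up to order $Ro^3$, so $\|(u',v',Ro\,w',\theta')(0)\|^2\lesssim Ro^6$. A continuity/bootstrap argument applied to \eqref{lem828} then propagates the $O(Ro^6)$ bound on some interval $[0,T']$ with $T'\leq T_a$. The main obstacle is that cubic self-terms such as $\int(\partial_y v')(u')^2\,dxdydz$ do not cancel under integration by parts, so the right-hand side of the energy identity is genuinely nonlinear in the energy; consequently $T'$ depends on $Ro$, and one must track the $L^2$ estimate in parallel with a higher-order $H^3$ bound on the error (available from the local well-posedness of \eqref{original}) in order to keep $\mathcal{P}$ under control over a macroscopic time window.
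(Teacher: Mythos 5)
Your proposal follows essentially the same route as the paper: pair the error equations with $Bu'$, $Bv'$, $BRo\,w'$, $\theta'$, use the Coriolis, pressure (via \eqref{div} and $w'|_{z=0,1}=0$) and buoyancy cancellations, bound the cross terms against the smooth profile by $C_a$ times the energy, absorb $Ro^3M_j$ into $C_aRo^6$, and close by Gr\"onwall. One correction to your final paragraph: the cubic self-terms do cancel, since they only ever appear in the combination $\int (u'\partial_x+v'\partial_y+Ro\,w'\partial_z)\tfrac{|u'|^2}{2}=-\tfrac12\int(\partial_xu'+\partial_yv'+Ro\,\partial_zw')|u'|^2=0$ by \eqref{div} and the boundary condition — exactly as you yourself noted two paragraphs earlier — so no parallel $H^3$ bound on the error is needed and the estimate closes at the $L^2$ level, which is what the paper does.
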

{\it Proof.}
 Without loss of generality, we set $H=1.$
Multiplying  (\ref{u_prime})-(\ref{theta_prime}) by $Bu^{\prime}$, $Bv^{\prime}$, $B Ro w^{\prime}$ and $\theta^{\prime}$, respectively,   and then we integrate the result with respect to space variable to get
\begin{align*} 
      &\frac{1}{2}\frac{d}{dt}\Big(B \Vert u^{\prime}\Vert^{2}
+B \Vert v^{\prime}\Vert^{2}
+B \Vert Row^{\prime}\Vert^{2}
+\Vert\theta^{\prime}\Vert^{2}\Big)\\
      &\quad =\frac{1}{Ro}\int \nabla p\cdot (Bu', Bv', BRo w')dxdxdz+K_{1}+K_{2}+K_{3}+ K_{4},
\end{align*}
where
\begin{align*}
K_{1}&=
-B\int\big( u^{\prime}\partial_{x}u_{a}u^{\prime}
+ u_{a}\partial_{x}u^{\prime}u^{\prime}
+ v^{\prime}\partial_{y}u_{a}u^{\prime} 
+ v_{a}\partial_{y}u^{\prime}u^{\prime}\\
      &\quad\quad\quad\quad
+Ro  w ^{\prime}\partial_{z}u_{a}u^{\prime} 
+Ro w_{a}\partial_{z}u ^{\prime}u^{\prime}
+Ro w^{\prime}u^{\prime}
-Ro^3 M_{1}u^{\prime}\big)dxdydz,\\
K_{2}&=
- B\int  \big( u^{\prime}\partial_{x}v_{a}v^{\prime}
+  u_{a}\partial_{x}v^{\prime}v^{\prime} 
+ v^{\prime}\partial_{y}v_{a}v^{\prime} 
+  v_{a}\partial_{y}v^{\prime}v^{\prime}\\
      &\quad \quad \quad\quad
+Ro   w ^{\prime}\partial_{z}v_{a}v^{\prime}
+Ro   w_{a}\partial_{z}v ^{\prime}v^{\prime}
-Ro^3  M_{2}v^{\prime})dxdydz,\\
K_{3}&=
-BRo\int \big( u^{\prime}\partial_{x}w_{a}Row^{\prime}
+Ro u_{a}\partial_{x}w^{\prime}Row^{\prime} +Ro v^{\prime}\partial_{y}w_{a}Row^{\prime}
\\
      &\quad\quad\quad\quad
+Ro  v_{a}\partial_{y}w^{\prime}Row^{\prime} +Ro^2  w^{\prime}\partial_{z}w_{a}Row^{\prime}
+Ro^2  w_{a}\partial_{z}w^{\prime}Row^{\prime} 
\\
 &\quad\quad\quad\quad
-Ro^3 M_{3}w^{\prime}\big)dxdydz,\\
K_{4}&=
-\int\big(  u^{\prime}\partial_{x}\theta_{a}\theta^{\prime} 
+   u_{a}\partial_{x}\theta^{\prime}\theta^{\prime} 
+  v^{\prime}\partial_{y}\theta_{a}\theta^{\prime}  
  +  v_{a}\partial_{y}\theta^{\prime}\theta^{\prime}
 \\
      &\quad\quad\quad\quad
+Ro   w ^{\prime}\partial_{z}\theta_{a}\theta^{\prime} 
+Ro   w_{a}\partial_{z}\theta^{\prime}\theta^{\prime} 
+Ro   v^{\prime}\theta^{\prime} -Ro^3   M_{4}\theta^{\prime}dxdydz.
\end{align*}
 Together with  integration by parts, H\"{o}lder's inequality and Young's inequality, we have
\begin{align*}
\frac{1}{B}|K_{1}| 
        &\leq \|u^{\prime}\| \|\partial_{x}u_{a}\|_{L^\infty}\|u^{\prime}\| 
+\|\partial_{x}u_{a}\|_{L^\infty}\|u^{\prime}\| ^2\\
        &\quad
+\|v^{\prime}\| \|\partial_{y}u_{a}\|_{L^\infty}\|u^{\prime}\| 
+\|\partial_{y}v_{a}\|_{L^\infty}\|u^{\prime}\| ^2\\
        &\quad +\|Row^{\prime}\| \|\partial_{z}u_{a}\|_{L^\infty}\|u^{\prime}\| 
+\|Ro\partial_{z}w_{a}\|_{L^\infty}\|u^{\prime}\| ^2\\
        &\quad +\|Row^{\prime}\| \|u^{\prime}\| 
+Ro^3\|M_{1}\| \|u^{\prime}\| \\
        &\leq C_a \left( \|u^{\prime}\| ^2 +\|v^{\prime}\| ^2 
+\|Row^{\prime}\| ^2+Ro^6\right).
\end{align*}
Similarly, it follows that 
\begin{equation*}
\frac{1}{B}|K_{2}| 
        \leq C_a( \|u^{\prime}\| ^2 +\|v^{\prime}\| ^2 
+\|Row^{\prime}\| ^2+Ro^6),
\end{equation*}
\begin{equation*}
\frac{1}{B}|K_{3}| 
        \leq C_a B  ( \|u^{\prime}\| ^2 +\|v^{\prime}\| ^2 
+\|Row^{\prime}\| ^2+Ro^6),
\end{equation*}
\begin{equation*}
|K_{4}| 
        \leq C_a ( \|u^{\prime}\| ^2 +\|v^{\prime}\| ^2 
+\|Row^{\prime}\| ^2+\|\theta^{\prime}\| ^2+Ro^6).
\end{equation*}
As for the term $\frac{1}{Ro}\int \nabla p\cdot (Bu', Bv', B Ro w') dxdxdz$, with the aid of \eqref{div}, $w'=0$ and $w_a=0$ at $z=0, 1$,
we have 
\begin{align}\label{pl2}
\frac{B}{Ro}\int \nabla p\cdot (u', v', Ro w') dxdxdz=0.
\end{align}

Then we have
\begin{align*} 
      &\frac{1}{2}\frac{d}{dt}\Big(\Vert\sqrt{B} u^{\prime}\Vert_{L^{2}}^{2}
+\Vert\sqrt{B} v^{\prime}\Vert_{L^{2}}^{2}
+ \Vert\sqrt{B} Row^{\prime}\Vert_{L^{2}}^{2}
+\Vert\theta^{\prime}\Vert_{L^{2}}^{2}\Big)\\
      &\quad
 \leq  C_a \mathcal{P}( \|(\sqrt{B}u', \sqrt{B}v', \sqrt{B}Ro w', \theta')\|_{L^2}^2)+C_a Ro^6.
\end{align*} 
The proof of this lemma is completed.  
$\square$
%{\it Step 2.} We present  the higher order estimates in this step.   Taking $\p^\sigma$-derivatives on \eqref{u_prime}-\eqref{theta_prime}, and then we take  $L^2$-inner product on the resulting equations with $\p^\sigma u', \p^\sigma v', BRo\p^\sigma w', \p^\sigma \theta'$, respectively.  
%As for the most difficult  nonlinear terms, such as
%$$F=\|\p^\sigma[(u', v', Ro w')\cdot \nabla (u', v' Ro w') ]-(u', v', Ro w')\cdot \nabla \p^\sigma(u', v' Ro w')\|^2,$$
%we use the calculus  inequality (cf. \cite{BKM, KM}): 
%$$\|\pa(f g)-f\pa g \|\leq C\left(\|g\|_{L^\infty}\|f\|_{H^s}+\|f\|_{L^\infty}\|g\|_{H^{s-1}}\right),$$ to obtain
%$$F\leq C \|\nabla(u', v', Ro w')\|^2_{L^\infty} \|(u', v', Ro w')\|_{H^3}^2.$$
%One can also refer to the Section 4 of  \cite{kato1972} for the nonlinear estimates of $F.$
%The estimates for pressure $\frac{1}{Ro}p$ can be done with the help of Proposition \ref{prop}. 
%Therefore, we  have  that
%\begin{align*} 
%      &\frac{1}{2}\frac{d}{dt}\Big(\Vert \p^\sigma u^{\prime}\Vert_{L^{2}}^{2}
%+\Vert \p^\sigma v^{\prime}\Vert_{L^{2}}^{2}
%+B H^2 \Vert Ro\p^\sigma w^{\prime}\Vert_{L^{2}}^{2}
%+\Vert\p^\sigma \theta^{\prime}\Vert_{L^{2}}^{2}\Big)\\
 %     &\quad \leq  C_a \mathcal{P}( \|(u', v', B Ro w', \theta')\|_{H^3}^2) +C_a Ro^6,
%\end{align*} 
%where we used the fact $\|\cdot\|_{L^\infty(\Omega)}\leq C \|\cdot\|_{H^s(\Omega)}, s>3/2,$ for $\Omega\subset\mathbb{R}^3.$
%Then we have \eqref{lem828}.  

As corollary   of  Lemma \ref{lem4.2}, we directly have the following results.
\begin{Lemma}\label{lem4.3}
The solution $(u', v', w', \theta')$ of error equations \eqref{u_prime}-\eqref{div} defined on $ [0, T']$  satisfy
\begin{align*}
&\|(u', v',  w', \theta')\|_{L^2}^2\leq C(B,H) Ro^4.
%&\|(u', v',  w', \theta')\|_{L^\infty}^2\leq C(B^{-1},H^{-1}) Ro^4. 
\end{align*}
Let $(\textbf{u}, \theta)$ denote the solution of  \eqref{original} \eqref{orbc}.  Therefore,  for $t\in [0, T'],$  it holds that 
\begin{align}
&\|(\textbf{u}-U-\textbf{u}_N,\ \theta-\Theta-\theta_N)\|_{L^2}^2\leq C(B,H) Ro^2,\label{l2ro}
%&\|(\textbf{u}-U-\textbf{u}_N,\ \theta-\Theta-\theta_N)\|_{L^\infty}^2\leq C(B^{-1},H^{-1}) Ro^2.\label{linf2}
\end{align}
where $\textbf{u}_N=(u_N, v_N, w_N).$ 
\end{Lemma}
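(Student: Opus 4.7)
The plan is to deduce Lemma 4.3 by feeding the differential inequality \eqref{lem828} of Lemma 4.2 into a nonlinear Gronwall / continuation argument, and then to peel off the $Ro$-expansion via \eqref{app}. As a first step I would impose that the initial data for the original system \eqref{original}--\eqref{orbc} be chosen to agree with the approximate profile $(U+\mathbf{u}_a,\Theta+\theta_a,P+p_a)$ at $t=0$, so that the error $(u',v',w',\theta')(\cdot,0)\equiv 0$. The initial boundary terms in \eqref{lem828} then drop out and we are left with
\begin{align*}
\Vert \sqrt{B}H(u',v',Row')(t)\Vert^{2}+\|\theta'(t)\|^{2}
\leq C_a \int_0^t \mathcal{P}\!\left(\|(\sqrt{B}Hu',\sqrt{B}Hv',\sqrt{B}Row',\theta')\|^2\right) ds + C_a Ro^{6}.
\end{align*}

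Next I would run a standard bootstrap: define
\begin{equation*}
T^{\star}:=\sup\Bigl\{t\in[0,T_a]:\ \|(\sqrt{B}Hu',\sqrt{B}Hv',\sqrt{B}Row',\theta')(s)\|^{2}\leq 2C_a Ro^{6}\ \text{for all } s\in[0,t]\Bigr\}.
\end{equation*}
On $[0,T^{\star})$ the polynomial $\mathcal{P}$ is controlled by a $C_a(B,H)$ multiple of its linear part, so the integral inequality becomes linear in $\|(\cdot)\|^2$ and the usual Gronwall lemma yields $\|(\sqrt{B}Hu',\sqrt{B}Hv',\sqrt{B}Row',\theta')(t)\|^{2}\leq C_a(B,H)Ro^{6}e^{C_a t}$. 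Choosing $T'\leq T_a$ small enough so that the factor $e^{C_a T'}$ does not exceed $2$ keeps us strictly below the bootstrap threshold, hence $T^{\star}\geq T'$ and the bound $\|(u',v',Row',\theta')\|^{2}\leq C(B,H) Ro^{6}$ holds on $[0,T']$. Dividing the $Row'$-component by $Ro^{2}$ promotes the $w'$ bound to $\|w'\|^{2}\leq C(B,H)Ro^{4}$, while the other three components are even smaller, so summing them gives $\|(u',v',w',\theta')\|^{2}\leq C(B,H)Ro^{4}$ as claimed.

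Finally, to get the second estimate \eqref{l2ro}, I would insert the decomposition $\mathbf{u}-U-\mathbf{u}_N=(\mathbf{u}_a-\mathbf{u}_N)+\mathbf{u}'=Ro\,\mathbf{u}_e^{(1)}+Ro^{2}\mathbf{u}_e^{(2)}+Ro^{3}\mathbf{u}_e^{(3)}+\mathbf{u}'$ (and the analogous identity for $\theta-\Theta-\theta_N$) and apply the triangle inequality. The $H^3$-bounds on $\mathbf{u}_e^{(i)}$ and $\theta_e^{(i)}$ from Lemma \ref{lem4.1} yield $\|\mathbf{u}_a-\mathbf{u}_N\|^{2}+\|\theta_a-\theta_N\|^{2}\leq C_a Ro^{2}$, and combining this with the already established $\|(u',v',w',\theta')\|^{2}\leq C(B,H)Ro^{4}$ produces the $Ro^{2}$ remainder stated in \eqref{l2ro}.

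The main obstacle is the nonlinear term $\mathcal{P}(\cdot)$ in \eqref{lem828}: Gronwall's lemma does not apply directly to a quadratic or higher-order integrand, which is why the continuation/bootstrap framework is required, together with a careful selection of $T'$ depending on $B$ and $H$ but not on $Ro$. I would also need to note that the pressure contribution vanishes as in \eqref{pl2} (using $\operatorname{div}(u',v',Row')=0$ from \eqref{div} and the boundary condition $w'=0$ at $z=0,1$), and that the remainder $M_j$ is controlled by the approximate profile alone so that the forcing genuinely sits at order $Ro^{3}$ in the equations and at order $Ro^{6}$ in the $L^2$-energy identity.
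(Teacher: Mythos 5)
Your proposal is correct and follows essentially the same route as the paper, which presents Lemma \ref{lem4.3} as an immediate corollary of Lemma \ref{lem4.2}: the bound $\|(u',v',Row',\theta')\|^2\leq C_a(B,H)Ro^6$ yields $\|w'\|^2\leq C(B,H)Ro^4$ after dividing by $Ro^2$, and \eqref{l2ro} follows from the triangle inequality applied to $\mathbf{u}-U-\mathbf{u}_N=Ro\,\mathbf{u}_e^{(1)}+Ro^2\mathbf{u}_e^{(2)}+Ro^3\mathbf{u}_e^{(3)}+\mathbf{u}'$. Your added bootstrap/continuation argument and the explicit choice of well-prepared initial data (so that the error vanishes at $t=0$) supply details the paper leaves implicit, but they do not change the substance of the argument.
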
 

Now we are ready to prove our main theorem on the nonlinear stability of shear flow for any small  fixed $Ro>0.$

%\begin{Theorem}\label{mainth}
%Assume $Ro>0, \xi\in \mathbb{R}_+^2.$
%Let $(\textbf{u}^*, \theta^*)(\cdot, t)$ be a strong solution to the perturbed system \eqref{perturbation_equations} \eqref{boundary_conditions_w} satisfying 
%$$\|(\mathbf{u}^*,\theta^*)(\cdot, 0)\|\leq \delta.$$
 %There exists a constant $Q_c'$ such that 
%if $Q=\frac{1}{2}\sqrt{B(\xi_1^2+\xi^2_2)}<Q_c',$ then we have  a time $T^\delta$ so that 
%\begin{align*}
%\sup _{0 \leq t \leq T^\delta}\|(\mathbf{u}^*(\cdot, t),\theta^*(\cdot, t))\| \geq \eta 
%\end{align*}
%for some  constant $\eta>\delta>0.$ \end{Theorem}
{\it Proof of Theorem \ref{mainth}.}
We denote the initial perturbation of the shear flow as $(\textbf{u}^*_0, \theta^*_0).$
Let  $\|(\textbf{u}^*_0,  \theta^*_0)\|_{L^2}= \delta$   for  example.   It follows from \eqref{l2ro} that 
$$\|(\textbf{u}^*-\textbf{u}_N,\theta^*-\theta_N)(t)\| \leq \sqrt{C(B,H)} Ro,$$
which implies 
\begin{align}
\|(\textbf{u}_N, \theta_N)\|-  \sqrt{C(B,H)} Ro\leq \|(\textbf{u}^*, \theta^*)\|\leq \|(\textbf{u}_N, \theta_N)\|+\sqrt{C(B,H)} Ro.
\end{align}

Without loss of generality, we assume  $\|(\textbf{u}_N, \theta_N)(t=0)\|_{L^2}=\|(\textbf{u}^*_0, \theta^*_0)\|_{L^2}.$  If  the instability condition $q<q_c$ holds,  it follows from Lemma 3.2  that   the solution $(\textbf{u}_N, \theta_N)$ generates   a growing mode $e^{\alpha \Im (c) t}$ for all $t\geq 0.$   Then there exists a time 
$$T^\delta :=\frac{1}{\alpha \Im(c)}\ln \frac{2\sqrt{C(B, H)}Ro+\eta}{\delta},\ \ \ {\rm for\ \ some}\ \ 0<\delta<\eta \leq 1,$$
such that 
\begin{align*}
\|(\textbf{u}^*,\theta^*)(T^\delta)\|\geq \|(\textbf{u}_N, \theta_N)(T^\delta)\|-\sqrt{C(B,H)} Ro>\eta.
\end{align*}

Next, we denote    the maximal  existence time of solutions  $(\mathbf{u}^*,\theta^*)$ as  $T.$   It follows from Proposition \ref{prop}  and  blow-up criteria  for  $(\textbf{u}^*,\theta^*)$ with fixed $Ro>0$ (cf. \cite{BKM}) that:  for any 
$t\in[0, T],$
$$ \|(\mathbf{u}^*,\theta^*)(\cdot,t)\|_{H^3}\leq C\|(\mathbf{u}^*,\theta^*)(\cdot, 0)\|_{H^3}\big( e^{\int_0^T [\|(\nabla \textbf{u}^*, \nabla \theta^*)(\tau)\|_{L^\infty}(1+1/Ro)]d\tau}\big)< +\infty.$$
Indeed, for $T^\delta$ it holds that 
\begin{align}
&\|(\textbf{u}^*,\theta^*)(T^\delta)\|_{H^3}\leq C\|(\mathbf{u}^*,\theta^*)(\cdot, 0)\|_{H^3} \big(e^{(3C \sqrt{C(B, H)}Ro+\eta)T^\delta(1+1/Ro)}\big)<+\infty,\notag
\end{align}
for  a fixed small $Ro>0.$
Hence we have $T^\delta\leq T.$ The proof of Theorem \ref{mainth} is completed. \hfill $ \square$

\section{Appendix}
In this section, we first  state the physical model of  inviscid non-conducting Boussinesq system in a
frame of reference rotating with steady angular velocity $\Omega \mathbf{k}$ about
the vertical direction. Then the  derivation of system \eqref{original} will be given.
The original system reads
\begin{equation}\label{original*}
\begin{cases}
\partial_{t_*} \mathbf{u}_* +\mathbf{u}_* \cdot \nabla_* \mathbf{u}_*+2 \Omega \mathbf{k} \times \mathbf{u}_*  =-\rho_0^{-1} \nabla_* p_*+\gamma g(\theta_*-\theta_0) \mathbf{k},\\
\partial_{t_*} \theta_* +\mathbf{u}_* \cdot \nabla_* \theta_*  =0,\\
\boldsymbol{\nabla}_* \cdot \mathbf{u}_* =0,
\end{cases}
\end{equation}
where $\mathbf{u}_*=(u_*,v_*,w_*)$ is the velocity relative to the rotating frame. $\theta_0$ and $\rho_0$ are, respectively,   the reference temperature and density. Here $\gamma, g$ are positive physical  constants and   $\rho_*=\rho_0(1-\gamma(\theta_*-\theta_0))$, $p_*$   and $\theta_*$  denote the density,  the relative pressure and the temperature of the fluid, respectively.  Here $\Omega$ and $\mathbf{k}$ represent, respectively, the reference  rotation rate and  the  unit vector in the $z_*$-direction, i.e., $\mathbf{k}=(0, 0, 1).$

In this study,  the space variable $(x_*, y_*, z_*)\in \mathbb{T}^2\times[0, H]$  and  $x_*, y_*$ are period of length $L$.   In the $z_*$ direction we assume  a  non-penetration  boundary condition: 
\begin{align*}
w_*=0 \quad \text { at } z_*=0,\ H. \label{bc}
\end{align*}
Suppose that the basic state is given by
\begin{equation} \label{basic*}
\begin{cases}
\mathbf{U}_*=V H^{-1} (z_*, 0, 0),\\
\Theta_*=\theta_0+\bar{\theta} H^{-1} z_*-2 \Omega V(\gamma g H)^{-1} y_*,\\
P_*=\rho_0\big(\frac{1}{2} \gamma g\bar \theta H^{-1} z_*^2-2 \Omega V H^{-1} y_* z_*\big),
\end{cases}
\end{equation} 
where 
\[ (x_*,  y_*)\in \mathbb{T}^2, \ \  0 \leq z_* \leq H, \]
and $V$ is a reference speed of the basic state and $\bar \theta$ is a constant scale of basic vertical temperature difference. 
The distribution of $\Theta_*$  indicates the distribution of density $\rho_*$. The instability of the basic state \eqref{basic*} is called baroclinic instability in meteorology.   It is called {\it baroclinic instability} because it depends essentially upon the difference between the surfaces of constant density
and of constant pressure in the fluid ( see Chapter 6 in \cite{Drazin} for the physical mechanism).  

 Then it is convenient to define the dimensionless variables
\begin{equation*}
\begin{split}
(x, y) & =(x_*, y_*)/L, \quad z=z_* / H, \quad t=V t_*/L, \\
(u, v) & =(u_*, v_*) / V, \quad w=Lw_* / V H R o, \\
\theta & =\gamma g H\big(\theta_*-\theta_0-\bar \theta z_* / H\big) / 2 \Omega VL, \\
p & =\big(p_*-\frac{1}{2} \gamma \rho_0 g\bar \theta H^{-1} z_*^2\big) / 2 \Omega V L\rho_0 ,
\end{split}
\end{equation*}
where the Rossby number, a characteristic ratio of the inertia to the Coriolis force, is given by
$$
R o=V / 2 \Omega L.
$$
We will use $B$ to denote Burger number:
$$B=\gamma g H \bar\theta/ (4\Omega^2 L^2).$$
Now the basic state \eqref{basic*} reduces to 
\begin{align}\label{shear*}
\mathbf{U}=(z, 0, 0), \quad \Theta=-y, \quad P=-y z,
\end{align}
where $0\leq z\leq 1.$
Finally, the system \eqref{original*} can be rewritten   as \eqref{original}.

\vspace{0.5cm}

\noindent {\bf {Acknowledgments.}} The authors would like to thank professor  Qingtang Su at Academy of Mathematics and Systems  Science, CAS  for his  guidance  and  insightful   discussion. 
Mao's research  was supported in part by  Postdoctoral Fellowship Program of CPSF under grant GZC20232910 and China Postdoctoral
Science Foundation under grant   2024M753430.  Wang's research  was   supported in part by Natural Science Foundation of China (NSFC)   under grants 12426630, 12426632, 12101350, 12271284;  Zhejiang Provincial Natural Science Foundation of China under grant LMS26A010013; and  Scientific Research Fund of Zhejiang  Provincial Education Department under grant Y202454255.

\vspace{0.5cm}

\noindent\textbf{Data availability.}
Data sharing not applicable to this article as no datasets were generated or analysed during the current study.

\noindent\textbf{Conflict of interest.}
The authors declare that they have no conflict of interest.


\begin{thebibliography}{10}

 \bibitem{AH}
 Abidi H., Hmidi  T.:  On the global well-posedness for Boussinesq system, J. Differential Equations, 233 (1) (2007) 199–220.

\bibitem{BLT}
Bardos C., Liu X.,  Titi  E.S.: Derivation of a Generalized Quasi-Geostrophic Approximation for Inviscid Flows in a Channel Domain: The Fast Waves Correction. Commun. Math. Phys. 405, 164 (2024)

\bibitem{BKM}
Beale J. T.,  Kato T.,  Majda  A.:  Remarks on the breakdown of smooth solutions for the 3-D Euler equations, Comm. Math. Phys., 94 (1) (1984)  61–66.  

\bibitem{BBCD}
Bedrossian J.,  Bianchini R.,  Coti Zelati M.,  Dolce M.:  Nonlinear inviscid damping and shear-buoyancy instability in the two-dimensional Boussinesq equations, Comm. Pure Appl. Math., 76 (12) (2023)  3685–3768. 


 \bibitem{BCD}
 Bianchini   R.,  Coti Zelati M.,  Dolce M.: Linear inviscid damping for shear flows near Couette in the 2D stably stratified regime, Indiana Univ. Math. J., 71 (4) (2022) 1467–1504.
 
 
 \bibitem{BCZE}
Bianchini R., Coti Zelati M., Ertzbischoff L.:  Ill-posedness of the
hydrostatic euler-boussinesq equations and failure of hydrostatic limit. 
Comm. Math. Phys. 406 (2025), no. 10, Paper No. 254, 50 pp.

\bibitem{BB94}  
  Bourgeois A. J., Beale  J. T.: Validity of the quasigeostrophic model for
large-scale flow in the atmosphere and ocean. SIAM Journal on Mathematical Analysis,
25(4):1023–1068, 1994.

\bibitem{BGVG}
 Bresch D., G\'erard-Varet  D.,   Grenier E.:  Derivation of the planetary
geostrophic equations. Archive for rational mechanics and analysis, 182(3):387–413,
2006.

\bibitem{BSV}
Buckmaster T.,  Shkoller S.,  Vicol V.:  Nonuniqueness of weak solutions to the SQG equation, Comm. Pure Appl. Math., 72 (9) (2019)  1809–1874. 





\bibitem{Caowu}
 Cao C.,  Wu J.:  Global regularity for the two-dimensional anisotropic Boussinesq equations with vertical dissipation, Arch.
Ration. Mech. Anal., 208 (3) (2013) 985–1004.

\bibitem{CCG}
 Castro  A.,  C\'{o}rdoba D.,  G\'{o}mez-Serrano J.:  Global smooth solutions for the inviscid SQG equation, Mem. Amer. Math. Soc., 266 (1292) (2020)  v+89 pp.
 
 
 
 \bibitem{CGI}
 C\'{o}rdoba D.,  G\'{o}mez-Serrano J.,  Ionescu A. D.:  Global solutions for the generalized SQG patch equation, Arch. Ration. Mech. Anal., 233 (3) (2019)  1211–1251.
 
 \bibitem{chae2006}
 Chae D.:  Global regularity for the 2D Boussinesq equations with partial viscosity terms, Adv. Math., 203 (2) (2006) 497–513.


 
\bibitem{ChaeN}
 Chae D.,  Nam H.-S.:  Local existence and blow-up criterion for the Boussinesq equations, Proc. Roy. Soc. Edinburgh Sect. A, 127 (5) (1997)  935–946.
 
 \bibitem{Charney}
 Charney, J. G., 1947: Dynamics of long waves in a baroclinic westerly current. J. Meteor., 4, 135–162.
 
 \bibitem{Charve}
Charve  F. : Convergence de solutions faibles du syst\`eme primitif des \'equations quasig\'eostrophique. Pr\'epublication centre de Math\'ematiques, \'Ecole Polytechnique, No2003-04, (2003)
 
 \bibitem{CDGG}
Chemin  J.-Y., Desjardins B.,  Gallagher I., and  Grenier E.:
Mathematical geophysics: An introduction to rotating fluids and the Navier-Stokes equations, volume 32. Clarendon Press, 2006.
 
 \bibitem{CS}
  Chen G.,  Su Q.:  Nonlinear modulational instabililty of the Stokes waves in 2D full water waves, Comm. Math. Phys., 402 (2) (2023)  1345–1452. 
 
 \bibitem{CH}
 Chen J.,  Hou T. Y.:  On stability and instability of $C^{1,\alpha}$ singular solutions to the 3D Euler and 2D Boussinesq equations, Comm. Math. Phys., 405 (5) (2024) Paper No. 112, 53 pp. 
 
 \bibitem{CIN}
 Constantin P.,  Ignatova M.,  Nguyen H. Q.:  Inviscid limit for SQG in bounded domains, SIAM J. Math. Anal., 50 (6) (2018)  6196–6207. 
 
 \bibitem{DP1}
 Danchin R.,  Paicu M.: Global well-posedness issues for the inviscid Boussinesq system with Yudovich’s type data, Commun. Math. Phys., 290 (1) (2009) 1–14.

\bibitem{DP2}
 Danchin  R.,  Paicu M.: Global existence results for the anisotropic Boussinesq system in dimension two, Math. Models Methods Appl. Sci., 21 (3) (2011) 421–457.
 
\bibitem{Drazin}
Drazin P. G., Reid W. H.:  Hydrodynamic stability, Cambridge Monographs on Mechanics and Applied Mathematics. Cambridge University Press, Cambridge-New York, (1981) xiv+525 pp. 

\bibitem{Eady1949}
Eady E.T. :  Long waves and cyclone waves, Tellus. Vol. 1, p. 3, 1949.

 \bibitem{FLR}
 Fefferman C., Luli G.,  Rodrigo J.:  The spine of an SQG almost-sharp front, Nonlinearity, 25 (2) (2012) 329–342.
 
 \bibitem{Fri}
Friedlander  S.:  On nonlinear instability and stability for stratified shear flow. J. Math.
Fluid Mech., 3:82–97, 2001.



\bibitem{Grenier}
Grenier  E.: On the nonlinear instability of Euler and Prandtl equations, Comm. Pure Appl. Math., 53 (2000) 1067–1091  

\bibitem{guo_2007}
Guo Y.,  Hallstrom C,  Spirn, D.:   Dynamics near unstable, interfacial fluids,  
Comm. Math. Phys., 270 (3)   (2007)  635-689.  

\bibitem{GS}
Guo Y., Strauss W.: Instability of periodic BGK equilibria, Comm. Pure Appl. Math., 48, (1995) 861–894  


 
 \bibitem{Hart1979}
Hart  J. E.:  Finite Amplitude Baroclinic Instability,  Annual Review of Fluid Mechanics
Vol. 11:147-172, 1979
 
\bibitem{houli}
Hou T.Y.,   Li C.: Global well-posedness of the viscous Boussinesq equations, Discrete Contin. Dyn. Syst., 12 (1) (2005) 1–12.


\bibitem{JLL}
Jin J., Liao S.,  Lin  Z.:  Nonlinear modulational instability of dispersive PDE models, Arch. Ration. Mech. Anal., 231 (3) (2019)  1487–1530

\bibitem{kato1972}
Kato T.:  Nonstationary flows of viscous and ideal fluids in $R^3$, J. Functional Analysis, 9 (1972) 296–305.  

\bibitem{KRYZ}
Kiselev  A., Ryzhik L.,  Yao Y.,  Zlato\v{o}  A.:  Finite time singularity for the modified SQG patch equation, Ann. of Math., (2), 184 (3) (2016)  909–948.

\bibitem{KM}
Klainerman S., Majda A.: Singular limits of quasilinear hyperbolic systems with large parameters
and the incompressible limit of compressible fluids, Commun. Pure Appl. Math., 34, 481-524 (1981)

\bibitem{LPZ}
 Lai M.J.,   Pan R.,  Zhao K.: Initial boundary value problem for two-dimensional viscous Boussinesq equations, Arch. Ration. Mech. Anal., 199 (3) (2011) 739–760.

\bibitem{Lititi}
 Li  J.,  Titi E.S.: Global well-posedness of the 2D Boussinesq equations with vertical dissipation, Arch. Ration. Mech. Anal., 220 (3) (2016) 983–1001.
 
  
 \bibitem{LW}
 Lin Z.,  Wang Y.:  Stability of rotating gaseous stars, Comm. Math. Phys., 402 (2) (2023)  1725–1763.

\bibitem{LZ}
Lin Z.,  Zeng C.:  Separable Hamiltonian PDEs and turning point principle for stability of gaseous stars, Comm. Pure Appl. Math., 75  (11) (2022)  2511–2572. 

\bibitem{Majda2003}
Majda, A.J.: Introduction to PDEs and Waves for the Atmosphere and Ocean. Courant Lecture Notes in
Mathematics, vol. 9. American Mathematical society, Providence (2003)

\bibitem{pedlosky}
 Pedlosky  J.: Geophysical Fluid Dynamics, Springer, New York, (1987).
 
\bibitem{ponce_1985}
Ponce G.: 
Remarks on a paper: "Remarks on the breakdown of smooth solutions for the 3-D Euler equations'' [Comm. Math. Phys., 94 (1984), no. 1, 61–66; MR0763762] by J. T. Beale, T. Kato and A. Majda.,
Comm. Math. Phys., 98 (3) (1985) 349-353.  


\bibitem{TR}
Temam R.:  On the Euler equations of incompressible perfect fluids, J. Functional Analysis, 20 (1) (1975) 32–43.

\bibitem{XZ}
Xie  H.,  Zi  R.:  Remarks on the nonlinear instability of incompressible Euler equations, Acta Math. Sci. Ser. B (Engl. Ed.), 31 (5) (2011) 1877–1888.   

\bibitem{SW}
Seng\"{u}l  T.  Wang S.,  Dynamic transitions and baroclinic instability for 3D continuously stratified Boussinesq flows, J. Math. Fluid Mech., 20 (3) (2018)  1173–1193.

\bibitem{SSB}
Sulem C., Sulem P.-L., Bardos C., Frisch U.:  Finite time analyticity for the two- and threedimensional Kelvin-Helmholtz instability, Comm. Math. Phys., 80  (4) (1981) 485–516.

 \bibitem{LY}
Yang  J.,  Lin  Z.: Linear inviscid damping for Couette flow in stratified fluid, J. Math. Fluid Mech., 20 (2) (2018) 445–472.
 

\bibitem{ZZhao}
Zhai C.,  Zhao W.:  Stability threshold of the Couette flow for Navier-Stokes Boussinesq system with large Richardson
number $\gamma^2>\frac{1}{4}$, SIAM J. Math. Anal., 55 (2) (2023) 1284–1318.

\bibitem{ZZ}
 Zhang Z.,  Zi R.:  Stability threshold of Couette flow for 2D Boussinesq equations in Sobolev spaces, J. Math. Pures Appl. (9), 179 (2023) 123–182.

\bibitem{Zi1}
Zillinger  C.:  On enhanced dissipation for the Boussinesq equations, J. Differential Equations, 282 (2021) 407–445.
 
  \bibitem{Zi2}
 Zillinger C.:  On the Boussinesq equations with non-monotone temperature profiles, J. Nonlinear Sci., 31 (4)  64.  (2021)
 
 \bibitem{Zi3}
 Zillinger C.:   On stability estimates for the inviscid Boussinesq equations. J. Nonlinear Sci. 33, no. 6, Paper No. 106, 38 pp.  (2023)


 

\end{thebibliography}
\end{document}